\theoremstyle{plain} \numberwithin{equation}{section}
\newtheorem{theo}{Theorem}[section]
\newtheorem{coro}[theo]{Corollary}
\newtheorem{prop}[theo]{Proposition}
\newtheorem{lemm}[theo]{Lemma}
\theoremstyle{definition}
\newtheorem{defi}[theo]{Definition}
\newtheorem{exam}[theo]{Example}
\newtheorem{rema}[theo]{Remark}
\def\Z{\mathbb Z}
\def\C{\mathbb C}
\def\R{\mathbb R}
\def\fg{g}
\def\G0{G^0}
\def\NG0T{N_{\G0}(T)}
\def\epsilona{\epsilon_\alpha}
\def\g{\mathfrak g}
\def\RG{\Delta(G)}
\def\MV{V}
\def\G0{G}
\def\NG0T{N_{\G0}(T)}
\renewcommand{\H}{\hat H}
\DeclareMathOperator{\rank}{rank}
\DeclareMathOperator{\GL}{GL}
\DeclareMathOperator{\Aut}{Aut}
\DeclareMathOperator{\Symp}{Symp}
\DeclareMathOperator{\Diff}{Diff}
\DeclareMathOperator{\Ker}{Ker}
\DeclareMathOperator{\U}{U}
\DeclareMathOperator{\PU}{PU}
\DeclareMathOperator{\Hom}{Hom}
\DeclareMathOperator{\Fix}{Fix}
\DeclareMathOperator{\SO}{SO}
\DeclareMathOperator{\trace}{trace}
\def\Jst{J_{st}}
\begin{document}
\title{Root systems and symmetries of torus manifolds}
\author[S. Kuroki]{Shintar\^o KUROKI}
\address{Graduate School of Mathematical Sciences, The University of Tokyo, 3-8-1 Komaba, Meguro-ku, Tokyo, 153-8914, Tokyo, Japan}
\email{kuroki@ms.u-tokyo.ac.jp}
\author[M. Masuda]{Mikiya Masuda}
\address{Department of Mathematics, Osaka City
University, Sumiyoshi-ku, Osaka 558-8585, Japan.}
\email{masuda@sci.osaka-cu.ac.jp}

\date{\today}
\thanks{
The first author was partially supported by 
Grant-in-Aid for Scientific Research (S)
24224002, Japan Society for Promotion of Science, and
the JSPS Strategic Young Researcher Overseas Visits Program for Accelerating Brain Circulation
``Deepening and Evolution of Mathematics and Physics, Building of International Network Hub based on OCAMI'',
and the bilateral program between Japan and Russia:
``Topology and geometry of torus actions and combinatorics of orbit quotients''.
The second author was partially supported by Grant-in-Aid for Scientific Research 25400095}
\subjclass[2000]{Primary 57S15, 14M25; Secondary 57S25}

\begin{abstract}
We associate a root system to a finite set in a free abelian group and prove that its irreducible subsystem is of type A, B or D.
Applying this general result to a torus manifold $M$, where a torus manifold is a $2n$-dimensional connected closed smooth manifold with a smooth effective action of an $n$-dimensional compact torus having a fixed point, 
we introduce a root system $R(M)$ for $M$ 
and show that if the torus action on $M$ extends to a smooth action of a connected compact Lie group $G$, then the root system of $G$ is a subsystem of $R(M)$ so that 
any irreducible factor of the Lie algebra of $G$ is of type A, B or D.  
Moreover, we show that only type A appears if $H^*(M)$ is generated by $H^2(M)$ as a ring.
We also discuss a similar problem for a torus manifold with an invariant stable complex structure.  Only type A appears in this case, too.  
\end{abstract}

\maketitle 


\section{Introduction}
\label{intro}

One of the original motivation of toric geometry initiated by Demazure \cite{de70} was to study the automorphism group $\Aut(X)$ of a toric variety $X$. He completely determined $\Aut(X)$ when $X$ is complete and non-singular, by introducing a root system associated to the fan of $X$ (see also \cite{oda88}). Cox \cite{cox95} generalized this result to the case when $X$ is complete and simplicial, i.e., a compact orbifold, using the homogeneous coordinate ring (Cox ring) of $X$.  Note that $\Aut(X)$ is an algebraic group in these cases and the $\C^*$-torus acting on $X$ is a maximal torus of $\Aut(X)$.   

A symplectic toric manifold $M$ is a compact symplectic manifold of dimension $2n$ with an action of an $n$-dimensional compact torus $T$ preserving the symplectic form and admitting a moment map.  As is well-known, $M$ is determined by the moment map image which is a simple polytope satisfying a non-singular condition (\cite{delz88}) and hence $M$ is equivariantly diffeomorphic to the projective non-singular toric variety $X_P$ associated to the normal fan of $P$. Unlike the complex case above, the group $\Symp(M)$ of symplectomorphisms of $M$ is infinite dimensional.  The second named author \cite{masu10} introduced a root system $R(P)$ for a non-singular polytope $P$ following the idea of Demazure and showed that if $M_P$ is the symplectic toric manifold corresponding to $P$ and $G$ is a compact Lie subgroup of $\Symp(M_P)$ containing the torus $T$ acting on $M_P$, then the root system $\Delta(G)$ of $G$ is a subsystem of $R(P)$ and there is a compact Lie subgroup $G_{\rm max}$ of $\Symp(M_P)$ such that $\Delta(G_{\rm max})=R(P)$.  Note that $R(P)$ agrees with the root system of the reductive part of $\Aut(X_P)$ and any irreducible subsystem of $R(P)$ is of type A. 

In this paper we consider a similar problem to the above in the smooth category.  Our geometrical object is a {\it torus manifold} $M$ which is an orientable connected closed smooth manifold of dimension $2n$ with an effective smooth action of the compact torus $T$ of dimension $n$ having a fixed point (\cite{ha-ma03}).  Complete non-singular toric varieties with the restricted $T$-action and symplectic toric manifolds are examples of torus manifolds.  However, there are many more torus manifolds, for instance, the standard sphere $S^{2n}$ of dimension $2n$ is a torus manifold with a natural $T$-action but it is not toric when $n\ge 2$.  

The $T$-action on the complex projective space $\C P^n$ or $S^{2n}$ extends to a transitive action of $\PU(n+1)$ or $\SO(2n+1)$ in a natural way. The first named author \cite{kuro10}, \cite{kuro11} classified torus manifolds when the $T$-actions extend to smooth actions of compact Lie groups which are transitive or have a codimension one principal orbit.  
If the $T$-action on a torus manifold $M$ extends to an effective action of a compact Lie group $G$, then $T$ is a maximal torus of $G$ and we may think of $G$ as a Lie subgroup of the group $\Diff(M)$ of diffeomorphisms of $M$.
The first purpose of this paper is to introduce a root system $R(M)$ for a torus manifold $M$, where $R(M_P)=R(P)$, and prove the following.  

\begin{theo}[see Theorem~\ref{theo:2-1}, Corollary~\ref{coro:3-1} and Theorem~\ref{theo:4-1}] \label{theo:1-1}
Let $M$ be a torus manifold.  Then any irreducible subsystem of $R(M)$ is of type A, B or D and only type A appears if $H^*(M)$ is generated by $H^2(M)$ as a ring. Moreover, if $G$ is a compact Lie subgroup of $\Diff(M)$ containing the torus acting on $M$, then the root system of $G$ is a subsystem of $R(M)$. \end{theo}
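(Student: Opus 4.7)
The three assertions of the theorem correspond to the three results quoted in its statement: the abstract classification Theorem~\ref{theo:2-1}, the cohomological refinement Corollary~\ref{coro:3-1}, and the geometric realization Theorem~\ref{theo:4-1}. I would treat them in that order, starting by setting up the combinatorial data attached to $M$. Each fixed point $p\in M^T$ gives an unordered basis of the character lattice $H^2(BT;\Z)$ (the tangential weights), and equivalently each characteristic submanifold $M_i$ contributes its normal weight $v_i$. Collecting these I obtain a finite set $S(M)\subset H^2(BT;\Z)$, and I would set $R(M)=R(S(M))$, where $R(S)$ denotes the Demazure-type root system associated to an arbitrary finite set $S$ in a free abelian group. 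By construction $R(M_P)=R(P)$ when $M_P$ is the symplectic toric manifold of a non-singular polytope $P$, recovering the case treated in \cite{masu10}.

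For the first assertion (type A, B or D), I would apply Theorem~\ref{theo:2-1}, whose proof I expect to run as follows: the definition of $R(S)$ forces $\langle v,\alpha^\vee\rangle\in\{0,\pm 1\}$ for every $v\in S$ and every root $\alpha$, with an explicit witness pair $(v,v-\alpha)\subset S$. This immediately restricts the possible rank-two irreducible subsystems to $A_1\times A_1$, $A_2$ and $B_2$, ruling out $G_2$ (whose long/short ratio $\sqrt 3$ violates the integrality bound). The classification of irreducible root systems in terms of their rank-two sub-diagrams then excludes the exceptional types $E_6,E_7,E_8,F_4$. To separate $B$ from $C$ I would observe that a long root $2e_i$ of type $C$ would require a vector $v\in S$ with $\langle v,\alpha^\vee\rangle=\pm 1$ for $\alpha=2e_i$, forcing a half-integer pairing that is incompatible with integrality of $S$; hence only type B survives alongside A and D.

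For Corollary~\ref{coro:3-1} I would invoke the theorem of Masuda--Panov identifying torus manifolds with $H^*(M)$ generated by $H^2(M)$ as those whose orbit structure is modelled on a simplicial sphere arising as the underlying complex of a complete non-singular fan $\Sigma$; under this identification $S(M)$ becomes the set of primitive ray generators of $\Sigma$, and $R(M)$ agrees with the root system of the reductive part of $\Aut(X_\Sigma)$, which is of type A by Demazure's original analysis \cite{de70}.

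Finally, for the inclusion $\Delta(G)\subset R(M)$ asserted by Theorem~\ref{theo:4-1}, I would argue as follows. Since $T$ is a maximal torus of $G$, the Weyl group $W_G=N_G(T)/T$ acts on $H^2(BT;\Z)$ and permutes the characteristic submanifolds of $M$, hence preserves $S(M)$. Given $\alpha\in\Delta(G)$, the rank-one subgroup $G_\alpha\subset G$ generated by $T$ and the $\alpha$-root space is locally $\SU(2)$, and its non-trivial Weyl element realizes the reflection $s_\alpha$ as a diffeomorphism of $M$ respecting the $T$-action up to $s_\alpha$. Analyzing the $G_\alpha$-orbit through a $T$-fixed point yields an invariant $2$-sphere whose two fixed points produce an explicit pair $(v,v-\alpha)\in S(M)\times S(M)$, and checking that $\langle w,\alpha^\vee\rangle\in\{0,\pm 1\}$ for every other $w\in S(M)$ (forced by $s_\alpha(S(M))=S(M)$ together with the Cartan integer bound) certifies $\alpha\in R(M)$. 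The hardest part of the overall argument, I expect, is the rank-two case analysis and the $B$-versus-$C$ separation in Step 2; Step 4 is mostly a translation between the Lie-theoretic data of $G$ and the combinatorial witness pair required by the definition of $R(M)$.
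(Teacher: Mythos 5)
Your overall architecture (define $R(M)$ from the vectors $v_i$ attached to the characteristic submanifolds, prove an abstract classification, then realize Weyl reflections as weakly equivariant diffeomorphisms) matches the paper, but two of your three steps have genuine gaps.

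The most serious one is in your argument for $\RG\subset R(M)$. Membership of $\alpha$ in $R(M)$ requires, by Definition~\ref{defi:2-1}, that $\langle\alpha,v_k\rangle=0$ for \emph{all but at most two} indices $k$ (and $=\pm1$ on those), not merely that $\langle\alpha,v_k\rangle\in\{0,\pm1\}$ for every $k$; your proposed certificate proves only the latter. Moreover, the vanishing on all but two $v_k$ is \emph{not} "forced by $s_\alpha(S(M))=S(M)$ together with the Cartan integer bound'': a reflection can preserve the finite set $\{\pm v_i\}$ while moving many of its elements (swapping several pairs), and each moved $v_k$ has $\langle\alpha,v_k\rangle\neq0$. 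The paper closes exactly this gap with Wiemeler's Lemma~\ref{lemm:3-4}: because the Weyl element $g\in N_{G_\alpha}(T)\setminus T$ lies in the identity component of $\Diff(M)$, it acts trivially on $H^2(M)$, and a trace computation on $\H^2_T(M)$ (which is freely generated by the classes $\tau_i$) shows that the induced permutation of the characteristic submanifolds fixes all but at most two of them. This equivariant-cohomology trace argument is the technical heart of Theorem~\ref{theo:4-1} and is absent from your plan. A second omission in the same step: the roots of $G$ pair with the relevant $v_i$ to $\pm N_\alpha$ for some integer $N_\alpha$ not a priori equal to $1$, so one must show $N_\alpha$ is constant on each irreducible factor and, separately, rule out type C for $\RG$; the paper does this in Lemma~\ref{lemm:4-3}, where excluding type C requires a genuinely geometric argument (analyzing the orbit $Kp$ of a fixed point and invoking the classification of homogeneous torus manifolds from \cite{kuro10}), not the lattice half-integrality argument you propose, which only addresses the abstract system $R(\MV)$.

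Your route to Corollary~\ref{coro:3-1} is also off: torus manifolds with $H^*(M)$ generated by $H^2(M)$ are not classified by complete non-singular fans, so you cannot identify $R(M)$ with the Demazure root system of some $\Aut(X_\Sigma)$. The paper's argument is more elementary: a type~1 root $\alpha$ has $\pi^*(\alpha)=\pm\tau_i$, which forces $M_i^T=M^T$ (Lemma~\ref{lemm:3-2}); when $H^*(M)$ is generated in degree two this is impossible by \cite{ma-pa06}, so $R(M)$ has no type~1 elements, and since a conjugate pair $\beta,\gamma$ would produce the type~1 elements $(\beta\pm\gamma)/2$, Corollary~\ref{coro:2-1} leaves only type A. Your treatment of the purely combinatorial Theorem~\ref{theo:2-1} is closest to being right in spirit (the $1:\sqrt2$ length ratio kills $G_2$, and a rank-two/diagram analysis does the rest), though the paper organizes it differently, via the embedding of $R(\MV)$ into the standard $B_m$ system in $\Z^m$ and the notion of conjugate type~2 roots, which is also what distinguishes type D from type A.
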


Wiemeler \cite{wiem12} proves that if the $T$-action on a torus manifold $M$ extends to an effective smooth action of a compact Lie group $G$, then any simple factor of (the Lie algebra of) $G$ is of type A, B or D.  
The theorem above recovers this result.

For a torus manifold $M$ with an invariant stable complex structure $J$ (such a manifold is called a unitary toric manifold in \cite{masu99}), one can also define a root system, denoted $R(M,J)$, which is a subsystem of $R(M)$.  The subgroup $\Diff(M,J)$ of $\Diff(M)$ preserving the stable complex structure $J$ is much smaller than $\Diff(M)$, in fact, known to be a Lie group (see Proposition~\ref{prop:5-1}). Our second main result in this paper is a stable complex  version of Theorem~\ref{theo:1-1}. 

\begin{theo}[see Theorem \ref{theo:5-1} and Corollary \ref{coro:5-1}] \label{theo:1-2}
Let $M$ be a torus manifold with an invariant stable complex structure $J$. Then any irreducible subsystem of $R(M,J)$ is of type A and if $G$ is a compact Lie subgroup of $\Diff(M,J)$ containing the torus acting on $M$, then the root system of $G$ is a subsystem of $R(M,J)$. 
\end{theo}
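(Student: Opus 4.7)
The plan is to deduce Theorem~\ref{theo:1-2} from Theorem~\ref{theo:1-1} by exploiting the extra rigidity provided by the invariant stable complex structure $J$. First I would recall the definition of $R(M,J)$ and observe how it sits as a subsystem of $R(M)$. For a general torus manifold, each characteristic submanifold $M_i$ determines a primitive weight $v_i\in H_2(BT)$ only up to sign, and it is the unordered pairs $\{\pm v_i\}$ that enter the construction of $R(M)$. The stable complex structure $J$ puts a complex structure on the normal bundle of each $M_i$ and thereby promotes each line $\ZZ v_i$ to a genuine vector $v_i$. The set $R(M,J)$ is then defined as the collection of roots of $R(M)$ compatible with this coherent choice of signs.

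Next I would inspect the classification in Theorem~\ref{theo:2-1} to see which roots survive the sign restriction. A type A root is of the form $v_i-v_j$, so the only information it uses about $v_i$ and $v_j$ is their relative sign. In contrast, the irreducible factors of type B or D arise precisely when the root system is forced to contain pairs $v_i\pm v_j$ simultaneously; realising such a pair requires the sign of one of the $v_k$ to be flipped by an element of the Weyl group of the ambient root system. Because the signs are fixed once and for all by $J$, no such flip is available, and only type-A roots can lie in $R(M,J)$. Verifying component by component that a B- or D-factor inside $R(M,J)$ would force a sign inconsistency is where I expect the main bookkeeping to live and is the main technical obstacle.

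For the second statement, let $G\subset\Diff(M,J)$ be a compact Lie subgroup containing $T$. By the last part of Theorem~\ref{theo:1-1}, the root system $\Delta(G)$ is already contained in $R(M)$. Since every element of $G$ preserves $J$, the induced action of $G$ on the normal bundle of each characteristic submanifold is complex-linear, so $G$ permutes the signed characteristic vectors $v_i$ rather than only their underlying unordered pairs $\{\pm v_i\}$. This extra compatibility forces each root of $G$ to respect the sign convention determined by $J$, and therefore $\Delta(G)\subset R(M,J)$, as required. Corollary~\ref{coro:5-1} will then be an immediate consequence of the first part of the theorem, since any simple factor of the Lie algebra of $G$ is of type A once $\Delta(G)\subset R(M,J)$.
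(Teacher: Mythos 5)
Your proposal follows essentially the same route as the paper: $R(M,J)$ is the part of $R(M)$ singled out by the sign normalization that $J$ imposes on the $v_i$, the type-A statement comes from Theorem~\ref{theo:2-1} because $R(M,J)$ contains no type-1 elements and no conjugate pairs, and the inclusion $\Delta(G)\subset R(M,J)$ is obtained by refining the proof of Theorem~\ref{theo:1-1} using the fact that $J$-preserving diffeomorphisms preserve the omniorientation (the paper's Lemma~\ref{lemm:3-4} and Lemma~\ref{lemm:4-1}), which excludes the sign-flipping case and forces $\langle\alpha,v_i\rangle=-\langle\alpha,v_j\rangle$. The ``main bookkeeping'' you anticipate is in fact immediate: every element of $R(M,J)$ takes the value $+1$ on exactly one $v_i$ and $-1$ on exactly one $v_j$, so it can be neither of type 1 nor a member of a conjugate pair, and Theorem~\ref{theo:2-1} then gives type A at once.
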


It is natural to ask whether there is a compact Lie subgroup $G_J$ of $\Diff(M,J)$ whose root system agrees with $R(M,J)$.  We do not know the answer in general but we can find such $G_J$ in some cases, for instance when $M$ is a complete non-singular toric variety but $J$ is not necessarily the standard complex structure.  The observation also shows that $R(M,J)$ and $G_J$ actually depend on $J$.

We make a remark on our root systems.  The root system introduced by Demazure for a complete non-singular toric variety is defined by the set of primitive edge vectors in the fan associated with the toric variety.  It does not depend on the underlying simplicial complex of the fan.  Similarly, the root system $R(P)$ for a non-singular polytope $P$ is defined by the primitive normal vectors to the facets of $P$ and does not depend on the combinatorial structure of $P$.  This is also the case for our root systems $R(M)$ and $R(M,J)$.  They are defined by some finite set $\{ v_i\}$ in $H_2(BT)=\Hom(S^1,T)$ where $v_i$'s correspond to the primitive edge vectors in the complex toric case and primitive normal vectors in the symplectic toric case.   

The organization of this paper is as follows. In Section \ref{sect:2}, we associate a root system $R(V)$ to a finite set $V$ in a free abelian group and prove that any irreducible subsystem of $R(V)$ is of type A, B or D. In Section \ref{sect:3}, we deduce the finite set $V$ for a torus manifold $M$ using the equivariant cohomology of $M$ and define the root system $R(M)$ to be $R(V)$. In Section \ref{sect:4}, we apply the results in the previous sections to extended $G$-actions on $M$ and complete the proof of Theorem~\ref{theo:1-1}.  In Section \ref{sect:5}, we study the stable complex transformations of a torus manifold $M$ with an invariant stable complex structure $J$.  We define the root system $R(M,J)$ as a subsystem of $R(M)$ and prove Theorem~\ref{theo:1-2}.  In Section \ref{sect:6}, we discuss $R(M,J)$ and $\Diff(M,J)$ for complete non-singular toric varieties $M$ with various stable complex structures $J$.


\section{Root system associated to a finite set} \label{sect:2}

Let $N$ be a free abelian group of rank $n$ and $\MV=\{v_i\}_{i=1}^m$ be a finite set of non-zero elements in $N$ which generates a subgroup of full rank.  In this section we will associate a root system $R(\MV)$ to the finite set $\MV$ and prove that any irreducible subsystem of $R(\MV)$ is of type A, B or D.  
This fact will be applied to the study of symmetry of a torus manifold in later sections.  

\begin{defi} \label{defi:2-1}
Let $\MV=\{v_i\}_{i=1}^m$ be a finite subset of $N$ which generates a subgroup of full rank.  We define $R(\MV)$ to be the subset of $N^*:=\Hom(N,\Z)$ consisting of all elements $\alpha$ of the following either type 1 or type 2: 
\begin{enumerate}
\item[type 1:] $|\langle\alpha,v_i\rangle|=1$ for some $i$ and $\langle\alpha,v_k\rangle=0$ for $k\not=i$,  
\item[type 2:] $|\langle \alpha,v_i\rangle|=|\langle \alpha,v_j\rangle|=1$ for some $i,j$ and $\langle\alpha,v_k\rangle=0$ for $k\not=i,j$,
\end{enumerate}
where $\langle\ ,\ \rangle$ denotes the natural pairing between $N^*$ and $N$.  
\end{defi}

We shall give some examples of $R(\MV)$.  

\begin{exam} \label{exam:2-1}
Take $N=\Z^n$ and let $\{e_i\}_{i=1}^n$ be the standard basis of $\Z^n$ and $\{e_i^*\}_{i=1}^n$ be its dual basis. 
If $\MV=\{e_i\}_{i=1}^n$, then 
\[
R(\MV)=\{\pm e_i^*\ (1\le i\le n),\ \pm e_i^*\pm e_j^*\ (1\le i<j\le n) \}.
\]
This is a root system of type $B_n$ and $\pm e_i^*$ are of type 1 while $\pm e_i^*\pm e_j^*$ are of type 2.  
If $\MV=\{e_1,\dots,e_n, -\sum_{i=1}^ne_i\}$, then 
\[
R(\MV)=\{\pm e_i^*\ (1\le i\le n),\ \pm(e_i^*-e_j^*) \ (1\le i<j\le n)\}.
\]
This is a root system of type $A_n$ and any element in $R(\MV)$ is of type 2. 
\end{exam}

If $R(\MV)$ contains an element $\alpha$ of type 1 (resp. type 2), then all the elements in $\MV$ except one element (resp. two elements) lie in the kernel of $\alpha$ which is a subgroup of corank one.  Noting this, we see the following. 

\begin{exam} \label{exam:2-2}
Let $N$ be a free abelian group of rank $2$ and $\MV=\{v_i\}_{i=1}^m$ $(m\ge 2)$ be a subset of $N$.  Suppose that $v_i$ and $v_{i+1}$ form a basis of $N$ for $i=1,2,\dots,m$, where $v_{m+1}=v_1$.  Then $R(\MV)$ is empty if $m\ge 5$.  Moreover, one can check that $R(\MV)$ is of type $B_2$ if $m=2$, of type $A_2$ if $m=3$ (see Example~\ref{exam:2-1}) and of type $A_1$ or $A_1\times A_1$ if $m=4$.  
\end{exam}

One can interpret $R(\MV)$ as follows.  Assembling evaluation maps $\langle\ ,v_i\rangle\colon N^*\to \Z$, we obtain a homomorphism $f:=\prod_{i=1}^m\langle\ ,v_i\rangle\colon N^*\to \Z^m$.  Then $R(\MV)$ is the inverse image of 
\begin{equation*} \label{eq:2-2}
R(m):=\{ \alpha\in \Z^m\mid (\alpha,\alpha)=1\text{ or }2\}
\end{equation*}
by $f$, where $(\ ,\ )$ denotes the standard scalar product on $\Z^m$.  Since $\MV=\{v_i\}_{i=1}^m$ is assumed to generate a subgroup of full rank, $f$ is injective so that we may identify $R(\MV)$ with its image $f(R(\MV))=f(N^*)\cap R(m)$.  As is well-known, $R(m)$ is a root system of type B (see \cite[p.64]{hump72}) and we will see below that $R(\MV)$ is also a root system.  However, $R(\MV)$ is not necessarily of type B and the main purpose of this section is to determine its type.  

In the following, we think of $N^*$ as a subgroup of $\Z^m$ and $R(\MV)=R(m)\cap N^*$ through the map $f$.  The reflection on $\Z^m$ with respect to $\alpha\in R(m)$ is given by 
\begin{equation} \label{eq:2-3}
r_\alpha(\beta):=\beta-\frac{2(\beta,\alpha)}{(\alpha,\alpha)}\alpha \qquad (\beta\in \Z^m)
\end{equation}
and $r_\alpha$ preserves $R(m)$.  If $\alpha$ is in $N^*$ (and hence $\alpha\in R(\MV)=R(m)\cap N^*$), then $r_\alpha$ also preserves $N^*$ so that it preserves $R(\MV)$.  Since $R(m)$ is a root system, this shows that $R(\MV)$ is also a root system.

We make one remark.  The scalar product $(\ ,\ )$ on $\Z^m$ induces a positive definite symmetric bilinear form on $N^*$ through $f$, denoted also by $(\ ,\ )$.  One notes that 
\begin{equation} \label{eq:2-4}
(\alpha,\beta)=\sum_{i=1}^m \langle \alpha, v_i\rangle\langle \beta,v_i\rangle\quad\text{for $\alpha,\beta\in N^*$}.
\end{equation}

In the rest of this section, we shall investigate irreducible subsystems of the root system $R(\MV)$. For $\alpha,\beta\in R(\MV)$ we set
\begin{equation} \label{eq:2-7}
a_{\beta,\alpha}:=\frac{2(\beta,\alpha)}{(\alpha,\alpha)} 
\end{equation}
so that 
\[
r_\alpha(\beta)=\beta-a_{\beta,\alpha}\alpha
\]
by \eqref{eq:2-3}. Note that $a_{\beta,\alpha}$ is either $0$, $\pm 1$ or $\pm 2$.  
We say that $\alpha$ is \emph{joined} to $\beta$ when $a_{\beta,\alpha}\not=0$.  As is well-known, when both $\alpha$ and $\beta$ are simple roots, $a_{\beta,\alpha}\le 0$, and they are joined by an edge in the Dynkin diagram of an irreducible subsystem of $R(\MV)$ if and only if $a_{\beta,\alpha}<0$.  

We will denote the standard basis of $\Z^m$ by $\{e_i^*\}_{i=1}^m$.  Note that a type 1 (resp. type 2) element of $R(\MV)$ is of the form $\pm e_i^*$ (resp. $\pm e_i^*\pm e_j^*$ $(i\not=j)$).      

\begin{defi} \label{defi:2-2}
We say that type 2 elements in $R(\MV)$ are \emph{conjugate} if they are of the form $\pm e_i^*\pm e_j^*$ for some common $i\not=j$ and linearly independent.  For instance, $e_i^*+e_j^*$ and $e_i^*-e_j^*$ are conjugate.
\end{defi}

The following lemma will be used to prove Theorem \ref{theo:2-1} stated below.  

\begin{lemm} \label{lemm:2-2}
Let $\Phi$ be an irreducible subsystem of $R(\MV)$.
\begin{enumerate}
\item Let $\alpha, \beta,\gamma,\delta$ be simple roots in $\Phi$ and of type 2.  If $a_{\beta,\alpha}=a_{\gamma,\alpha}=a_{\delta,\alpha}=-1$, then there exists a conjugate pair in $\{\beta,\gamma,\delta\}$.
\item Let $\beta$ and $\gamma$ be conjugate simple roots of type 2 in $\Phi$ and $\lambda$ be another type 2 simple root in $\Phi$.  If $a_{\beta,\lambda}=-1$, then $a_{\gamma,\lambda}=-1$.  
\end{enumerate}
\end{lemm}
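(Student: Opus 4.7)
The plan is to work in the ambient $\Z^m$ via the embedding $f$, where every type 2 root takes the form $\pm e_i^*\pm e_j^*$ with $i\neq j$, so $(\alpha,\alpha)=2$ and consequently $a_{\beta,\alpha}=(\beta,\alpha)$ for any type 2 root $\alpha$. The crucial elementary observation is that for two type 2 roots $\alpha$ and $\beta$, the equality $(\beta,\alpha)=-1$ forces $\beta$ to share exactly one index with $\alpha$, with opposite signs on that shared coordinate; this follows directly from expanding the inner product $(\beta,\alpha)=\sum_k\langle\beta,v_k\rangle\langle\alpha,v_k\rangle$ applied to $\alpha,\beta\in\Z^m$. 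Throughout I use the standing fact that distinct simple roots $\mu,\nu$ of a root system satisfy $(\mu,\nu)\leq 0$.

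For part (1), write $\alpha=\epsilon_1 e_i^*+\epsilon_2 e_j^*$. By the observation each of $\beta,\gamma,\delta$ shares exactly one of the two indices $i,j$ with $\alpha$, so by the pigeonhole principle at least two of them, say $\beta$ and $\gamma$, share the same index, say $i$. The hypothesis $(\beta,\alpha)=(\gamma,\alpha)=-1$ then forces both to carry the sign $-\epsilon_1$ on the coordinate $e_i^*$. If their remaining indices were different, a one-line computation would give $(\beta,\gamma)=(-\epsilon_1)(-\epsilon_1)=1$, contradicting simple-root non-positivity. Hence their second indices also coincide; since $\beta\neq\gamma$, the signs on that second shared coordinate must differ, and $\{\beta,\gamma\}$ is a conjugate pair.

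For part (2), write $\beta=\sigma_1 e_i^*+\sigma_2 e_j^*$ and $\gamma=\sigma_3 e_i^*+\sigma_4 e_j^*$. Linear independence rules out $(\sigma_3,\sigma_4)=\pm(\sigma_1,\sigma_2)$, so $(\beta,\gamma)=\sigma_1\sigma_3+\sigma_2\sigma_4\in\{-2,0,2\}$, and the simple-root inequality forces $(\beta,\gamma)=0$; thus exactly one of $\sigma_1\sigma_3,\sigma_2\sigma_4$ equals $+1$ and the other equals $-1$. Since $\lambda$ is type 2 with $(\beta,\lambda)=-1$, it must share exactly one of the indices $i,j$ with $\beta$. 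A direct computation then shows $(\gamma,\lambda)=-\sigma_1\sigma_3$ if $\lambda$ shares index $i$, and $(\gamma,\lambda)=-\sigma_2\sigma_4$ if $\lambda$ shares index $j$. In either branch the value is $\pm 1$; the value $+1$ is incompatible with $(\gamma,\lambda)\leq 0$ for distinct simple roots, so the branch that actually occurs is the one producing $(\gamma,\lambda)=-1$, i.e.\ $a_{\gamma,\lambda}=-1$.

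The argument is essentially careful sign bookkeeping with no serious obstacle; what does the conceptual work is the recognition that the simple-root inequality $(\mu,\nu)\leq 0$ is exactly the mechanism forcing the indices and signs to align into the conjugate configuration described in Definition~\ref{defi:2-2}.
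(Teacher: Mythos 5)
Your proof is correct and takes essentially the same approach as the paper's: both work with the explicit coordinates $\pm e_i^*\pm e_j^*$ in $\Z^m$ and use the inequality $(\mu,\nu)\le 0$ for distinct simple roots to pin down the shared indices and signs. The only cosmetic difference is that in part (2) you derive the sign pattern of the conjugate pair from $(\beta,\gamma)=0$ rather than writing $\beta=be_j^*+ce_k^*$, $\gamma=be_j^*-ce_k^*$ directly from Definition~\ref{defi:2-2}, which is what the paper does.
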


\begin{proof}
(1) Since $\alpha,\beta,\gamma$ are of type 2,  the assumption means that 
\begin{equation} \label{eq:2-9}
(\beta,\alpha)=(\gamma,\alpha)=(\delta,\alpha)=-1
\end{equation} 
by \eqref{eq:2-7}.  Recall that type 2 elements are of the form $\pm e_i^*\pm e_j^*$.  Write $\alpha=ae_i^*-b e_j^*$ with $a,b\in\{\pm 1\}$.  Then \eqref{eq:2-9} implies that two of $\beta, \gamma, \delta$, say $\beta$ and $\gamma$, must be of the form $b e_j^*+c e_k^*$ and $b e_j^*+d e_\ell^*$ for some $k, \ell\not=i$ and $c,d \in\{\pm 1\}$.  If $k\not=\ell$, then $a_{\beta,\gamma}=(\beta,\gamma)=1$ but $a_{\beta,\gamma}\le 0$ because $\beta$ and $\gamma$ are simple roots.  This shows that $k=\ell$, proving that $\beta$ and $\gamma$ are conjugate. 

(2) The assumption $a_{\beta,\lambda}=-1$ implies $(\beta,\lambda)<0$.  Moreover, $(\gamma,\lambda)\le 0$ since $\gamma$ and $\lambda$ are simple roots.  On the other hand, since $\beta$ and $\gamma$ are conjugate, we may assume $\beta=b e_j^*+c e_k^*$ and $\gamma=b e_j^*-c e_k^*$ for some $j\not=k$ and $b,c\in\{\pm 1\}$.  These show that $\lambda$ must be of the form $-b e_j^*+d e_\ell^*$ for some $\ell\not=j, k$ and $d\in \{\pm 1\}$, and hence $a_{\gamma,\lambda}=(\gamma,\lambda)=-1$.  
\end{proof}

The following is our main result in this section. 

\begin{theo} \label{theo:2-1}
Let $\Phi$ be an irreducible subsystem of $R(\MV)$ of $\rank\ge 2$. Then $\Phi$ is of type A, B or D. More precisely, we have 
\begin{enumerate}
\item $\Phi$ is of type B $\Longleftrightarrow$ $\Phi$ contains a simple root of type 1, 
\item $\Phi$ is of type D $\Longleftrightarrow$ any simple root in $\Phi$ is of type 2 and $\Phi$ contains conjugate simple roots,
\item $\Phi$ is of type A $\Longleftrightarrow$ any simple root in $\Phi$ is of type 2 and $\Phi$ does not contain conjugate simple roots,
\end{enumerate} 
where $\rank\Phi\ge 4$ when we say $\Phi$ is of type D in {\rm (2)} above.
\end{theo}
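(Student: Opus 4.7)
The plan is to combine the classification of irreducible root systems with Lemma~\ref{lemm:2-2} to pin down the type of $\Phi$. The key a priori constraint is that every root of $\Phi \subseteq R(m)$ has squared length $1$ (type 1) or $2$ (type 2), so the ratio of any two root lengths is $1$ or $\sqrt{2}$; in particular $\Phi$ is never of type $G_2$. I split the argument on whether $\Phi$ has a simple root of type 1, and will show that the three conditions on the right-hand sides of (1), (2), (3) are mutually exclusive and exhaustive, forcing the corresponding types.

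Suppose first that every simple root of $\Phi$ is of type 2. Since the Weyl group preserves length, every root of $\Phi$ then has squared length $2$, so $\Phi$ is simply-laced and therefore of type $A_n$, $D_n$ or $E_n$. To rule out $E_n$ for $n=6,7,8$, I would look at the unique branching simple root $\alpha$ with three neighbors $\beta,\gamma,\delta$: Lemma~\ref{lemm:2-2}(1) forces two of them, say $\beta,\gamma$, to be conjugate, and Lemma~\ref{lemm:2-2}(2) then forces $\beta$ and $\gamma$ to share all their simple-root neighbors, which direct inspection of each $E_n$ diagram shows to be impossible (the three neighbors of the branching node have pairwise different degrees). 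To distinguish type $A$ from type $D$: for $D_n$ with $n \geq 4$, the same argument at the branching node shows that only the two fork tips share their neighbor set, so they must be the conjugate pair predicted by Lemma~\ref{lemm:2-2}(1). Conversely, in an $A_n$ chain with $n \geq 4$, a hypothetical conjugate simple pair $\beta,\gamma$ is orthogonal, hence not adjacent in the chain, and Lemma~\ref{lemm:2-2}(2) forces every neighbor of $\beta$ to also be a neighbor of $\gamma$; tracing along the chain from both ends using the degree-$2$ bound at internal nodes yields a contradiction.

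Now suppose that some simple root of $\Phi$ is of type 1. Since $\Phi$ is irreducible of rank $\geq 2$ and type 1 roots $\pm e_i^*$ are mutually orthogonal, $\Phi$ must also contain a type 2 simple root, so $\Phi$ is not simply-laced; the remaining options after excluding $G_2$ are $B_n$, $C_n$ ($n\geq 3$) and $F_4$. Type $C_n$ ($n\geq 3$) is ruled out because its long roots have squared length $4$, which does not occur in $R(m)$. For $F_4$, the $24$ short roots form an irreducible $D_4$ subsystem of $F_4$; but in $R(m)$ every type 1 root has the form $\pm e_i^*$, and such roots form a disjoint union of mutually orthogonal copies of $A_1$, which cannot carry an irreducible $D_4$ structure. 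Hence $\Phi$ is of type $B_n$.

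The main obstacle I anticipate is the clean exclusion of the exceptional types $E_n$ and $F_4$, which reduces to a finite but slightly delicate combinatorial check: translating Lemma~\ref{lemm:2-2} into constraints on branching nodes of the $E$ diagrams and identifying the short-root $D_4$ inside $F_4$. The minor rank-$3$ overlap $A_3 \cong D_3$ (where a conjugate simple pair does exist) is harmless and is absorbed by the parenthetical $\rank \Phi \geq 4$ qualifying statement (2).
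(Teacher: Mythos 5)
Your overall architecture is the same as the paper's: use the length dichotomy (type 1 short, type 2 long, ratio $1:\sqrt2$) to kill $G_2$; in the simply-laced case reduce to $A$, $D$, $E$ and exclude $E$ by applying Lemma~\ref{lemm:2-2}(1) at the branching node and Lemma~\ref{lemm:2-2}(2) to propagate adjacency to the conjugate partner; and separate $A$ from $D$ by the presence of a conjugate simple pair, with the $A_3\cong D_3$ overlap absorbed by the rank~$\ge 4$ proviso. Those parts are fine and match the paper, modulo wording.

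There is, however, one step that does not hold up: your exclusion of $C_n$ ($n\ge 3$) in the mixed-length case. You argue that $C_n$ is impossible because ``its long roots have squared length $4$, which does not occur in $R(m)$.'' But the isomorphism type of a root system is invariant under rescaling; the squared length $4$ only appears in the standard model of $C_n$ where the short roots are normalized to squared length $2$. A hypothetical copy of $C_n$ inside $R(m)$ would have its short roots among the type 1 elements (squared length $1$) and its long roots among the type 2 elements (squared length $2$), and the resulting length ratio $\sqrt2$ is exactly the $B/C$ ratio --- so nothing about absolute lengths rules $C_n$ out. The correct obstruction is one you already state but do not use here: the type 1 elements are of the form $\pm e_i^*$ and hence pairwise orthogonal, so no two short simple roots of $\Phi$ can be joined in its Dynkin diagram; since $C_n$ for $n\ge 3$ has $n-1$ short simple roots forming a connected $A_{n-1}$ chain, it is excluded. (This is the mechanism in the paper's proof, and it simultaneously disposes of $F_4$, whose two short simple roots are joined, making your detour through the $D_4$ of short roots of $F_4$ unnecessary.) With that substitution the argument is complete.
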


\begin{proof}
By \eqref{eq:2-4}, a root of type 1 is a short root while a root of type 2 is a long root, and the ratio of their length is $1:\sqrt{2}$ so that $\Phi$ is not of type ${\rm G_2}$.   

Suppose that $\Phi$ contains a simple root of type 1 (short root).  If $\Phi$ contains two simple roots $\alpha,\beta$ of type 1, then $a_{\beta,\alpha}=2(\beta,\alpha)=0$.  The reason is as follows.  Since $\alpha$ and $\beta$ are of type 1, $\alpha=\pm e_i^*$ and $\beta=\pm e_j^*$ for some $i$ and $j$, and since $\alpha$ and $\beta$ are simple roots, they are linearly independent; so $i\not=j$ and hence $(\alpha,\beta)=0$.  Therefore $\Phi$ must contain a simple root of type 2 (a long root) and any two short simple roots are not joined in the Dynkin diagram of $\Phi$.  Such $\Phi$ is of type B.  

By the above argument, we may assume that every simple root in $\Phi$ is of type 2 in the sequel. 
Therefore, all simple roots of $\Phi$ have the same length and such $\Phi$ must be of type A, D or E.  We shall prove that type E does not occur.  

Suppose that $\Phi$ is of type E.  Then the Dynkin diagram of $\Phi$ has a vertex connected to three other vertices and this means that $\Phi$ contains simple roots $\alpha,\beta,\gamma,\delta$ which satisfy the assumption in Lemma~\ref{lemm:2-2} (1).  Therefore, there is a conjugate pair in $\beta,\gamma,\delta$ and we may assume that $\beta$ and $\gamma$ are conjugate.  
Since the Dynkin diagram of $\Phi$ is of type E, either $\beta$ or $\gamma$, say $\beta$, is joined to some other simple root $\lambda$ in $\Phi$ so that $a_{\beta,\lambda}=-1$.  Then $a_{\gamma,\lambda}=-1$ by Lemma~\ref{lemm:2-2} (2) . This means that $\gamma$ and $\lambda$ are joined in the Dynkin diagram of $\Phi$ but this does not occur in the Dynkin diagram of type E.  Hence, $\Phi$ is not of type E. 

In the sequel, $\Phi$ is of type A or D.  We note that $-1$ does not appear more than twice in any row of the Cartan matrix of type A and that $-1$ appears three times in some row of the Cartan matrix of type D (see \cite[p.59]{hump72}).  Therefore, if $\Phi$ is of type D, then $\Phi$ contains conjugate simple roots by Lemma~\ref{lemm:2-2} (1). 

Conversely, suppose that $\Phi$ contains conjugate simple roots and $\rank\Phi\ge 4$.  Let $\beta$ and $\gamma$ be conjugate simple roots. Let $\lambda$ to be a simple root joined to $\beta$. Then since $a_{\beta,\lambda}=-1$, $a_{\gamma,\lambda}=-1$ by Lemma~\ref{lemm:2-2} (2).  This means that $\lambda$ is also joined to $\gamma$. But this does not occur in the Dynkin diagram of type A since $\rank\Phi\ge 4$, thus $\Phi$ is of type D. 
\end{proof}

If $\beta$ and $\gamma$ in $R(\MV)$ are conjugate, 
then $(\beta\pm\gamma)/2$ are in $R(\MV)$ and of type 1. Therefore, the following follows from Theorem~\ref{theo:2-1}. 

\begin{coro} \label{coro:2-1} 
Any irreducible factor of $R(\MV)$ is of type A or B.  If there is no element of type 1 in $R(\MV)$, then any irreducible factor of $R(\MV)$ is of type A.
\end{coro}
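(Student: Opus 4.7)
The plan is to combine Theorem~\ref{theo:2-1} with the observation just displayed, that any conjugate pair $\beta,\gamma\in R(\MV)$ produces the type~1 elements $(\beta\pm\gamma)/2$ in $R(\MV)$. A rank~$1$ irreducible factor of $R(\MV)$ is automatically of type~$A_1$, so I may restrict attention to an irreducible factor $\Phi\subset R(\MV)$ of rank $\ge 2$, for which Theorem~\ref{theo:2-1} already leaves only the three possibilities A, B, or D. The first assertion then reduces to excluding type~D.

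To rule out type~D, I would suppose $\Phi$ is of type~D (so $\rank\Phi\ge 4$) and invoke Theorem~\ref{theo:2-1}(2) to extract a conjugate pair of simple roots $\beta,\gamma\in\Phi$. The preceding observation then furnishes $\delta:=(\beta+\gamma)/2\in R(\MV)$, of type~1. Writing $\beta$ and $\gamma$ in the form $\pm e_j^*\pm e_k^*$ with common indices $j,k$, a short check (essentially already appearing in the proof of Lemma~\ref{lemm:2-2}(1)) shows $(\beta,\gamma)=0$, so by~\eqref{eq:2-4}, $(\beta,\delta)=(\beta,\beta)/2=1\ne 0$. Therefore $\delta$ is non-orthogonal to $\beta\in\Phi$, which places $\delta$ in the same irreducible component as $\beta$, namely $\Phi$ itself. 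But any root system of type~D contains only long roots while $\delta$ is short (type~1), and this contradiction rules out type~D.

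For the second assertion, if $R(\MV)$ contains no element of type~1 then no irreducible factor of rank $\ge 2$ can have a simple root of type~1, so Theorem~\ref{theo:2-1}(1) also rules out type~B; combined with the exclusion of type~D above (and the trivial rank~$1$ case), every irreducible factor must be of type~A. The one step that warrants a moment of care is the placement of $\delta$ inside $\Phi$; there I would invoke the standard identification of the irreducible decomposition of a root system with its decomposition into orthogonality-connected components, after which the short-versus-long-root contradiction in type~D is immediate.
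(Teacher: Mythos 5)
Your proposal is correct and follows exactly the route the paper intends: the paper's proof of Corollary~\ref{coro:2-1} consists precisely of the displayed observation that a conjugate pair $\beta,\gamma$ yields the type~1 elements $(\beta\pm\gamma)/2$ in $R(\MV)$, combined with Theorem~\ref{theo:2-1}. You have merely made explicit the two steps the paper leaves implicit --- that $(\beta+\gamma)/2$ is non-orthogonal to $\beta$ and hence lies in the same irreducible factor, and that a type~D system cannot contain roots of two lengths --- so the argument matches the paper's.
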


We conclude this section with a remark.  The dual of the root system $R(\MV)$ is given by 
\[
R^\vee(\MV):=\{ \alpha^\vee:=\frac{2\alpha}{(\alpha,\alpha)}\mid \alpha\in R(\MV)\}
\]
(see \cite[p. 43]{hump72}) and it is again a root system.  We note that 
\begin{enumerate}
\item  if $\alpha$ is of type 1, then $|\langle\alpha^\vee,v_i\rangle|=2$ for some $i$ and $\langle\alpha^\vee,v_k\rangle=0$ for $k\not=i$,
\item  if $\alpha$ is of type 2, then $|\langle \alpha^\vee,v_i\rangle|=|\langle \alpha^\vee,v_j\rangle|=1$ for some $i,j$ and $\langle\alpha^\vee,v_k\rangle=0$ for $k\not=i,j$. 
\end{enumerate}
Since the dual of an irreducible root system of type A (resp. type B) is of type A (resp. type C), any irreducible factor of $R^\vee(\MV)$ is of type A or C by Corollary~\ref{coro:2-1}.  

\section{Torus manifolds} \label{sect:3}

A \emph{torus manifold} $M$ is a closed orientable smooth $2n$-dimensional manifold 
with a smooth effective action of a compact torus $T$ of dimension $n$ having a fixed point.   
A closed codimension 2 submanifold of $M$ is called a \emph{characteristic submanifold} of $M$ if it is fixed pointwise under some circle subgroup of $T$ and has a $T$-fixed point.  There are only finitely many $T$-fixed points and characteristic submanifolds in $M$.  We denote the characteristic submanifolds of $M$ by $M_i$'s $(i\in [m]=\{1,\dots,m\})$.  Since $M$ is assumed to be orientable and $M_i$ is fixed pointwise under some circle subgroup, $M_i$ is also orientable.  We choose and fix an \emph{omniorientation} on $M$, which is an orientation on $M$ and on each $M_i$.

\subsection{Subfamilies of torus manifolds}

A toric variety $X$ (over the complex numbers $\C$) is a normal algebraic variety of complex dimension $n$ with an algebraic action of $(\C^*)^n$ having an open dense orbit, where $\C^*=\C\backslash \{0\}$. A toric variety is not necessarily compact and may have a singularity.  A compact smooth (in other words, complete non-singular) toric variety is often called a \emph{toric manifold}. A typical example of toric manifold is $\C P^n$ with a standard action of $(\C^*)^n$.  The $(\C^*)^n$-action on a toric manifold has a finitely many fixed points and a toric manifold with the restricted action of the compact torus $T$ is a torus manifold. 

A \emph{topological toric manifold} introduced in \cite{is-fu-ma13} is a topological analog of a toric manifold. It is a closed smooth manifold $M$ of dimension $2n$ with a smooth effective action of $(\C^*)^n$ such that it has an open dense orbit and $M$ is covered by finitely many invariant open subsets each equivariantly diffeomorphic to a faithful \emph{smooth} representation space of $(\C^*)^n$.  One can see that a toric manifold is a topological toric manifold and the family of topological toric manifolds is much wider than that of toric manifolds.  Similarly to the toric case, a topological toric manifold with the restricted action of the compact torus $T$ is a torus manifold. 

There is another topological analog of a toric manifold, now called a \emph{quasitoric manifold}, which was introduced by Davis-Januszkiewicz (\cite{da-ja91}).  The $T$-action on a torus manifold $M$ is called \emph{locally standard} if any point of $M$ has an invariant open neighborhood equivariantly diffeomorphic to an invariant open set of a faithful $T$-representation space of real dimension $2n$.  If the $T$-action on $M$ is locally standard, then the orbit space $M/T$ is a manifold with corners.  A quasitoric manifold is a locally standard torus manifold whose orbit space is a simple polytope of dimension $n$. 
It is not difficult to see that there are many quasitoric manifolds which do not arise from toric manifolds (\cite{da-ja91}) while it has been shown by Suyama \cite{suya14} recently that there are many toric manifolds which are not quasitoric.  
However, the family of topological toric manifolds contains both the family of toric manifolds and the family of quasitoric manifolds (\cite{is-fu-ma13}).  

The cohomology rings of topological manifolds are generated by degree two elements (\cite[Proposition 8.3]{is-fu-ma13}).  So, the sphere $S^{2n}$ with a standard action of $T$ is not topological toric when $n\ge 2$ while it is a torus manifold (also see \cite{kuro13}).  It is easy to see that there are many torus manifolds which have non-vanishing odd degree cohomology groups (see \cite[Section 11]{is-fu-ma13} for example), so the family of torus manifolds is much wider than that of topological toric manifolds.  

\subsection{Equivariant cohomology}

The equivariant cohomology of a torus manifold $M$ fits well to the study of $M$.  
It is defined to be 
\[
H^*_T(M):=H^*(ET\times_T M)
\]
where $ET\to BT$ is the universal principal $T$-bundle and $ET\times_T M$ is the orbit space 
of $ET\times M$ by the $T$-action given by $(e,p)\to (et^{-1},tp)$ for $(e,p)\in ET\times M$ and $t\in T$.  
Let $S=H^{>0}(BT)$ and 
\[
\H^*_T(M):=H^*_T(M)/S\text{-torsion}.
\]
If $H^{odd}(M)=0$ (this is the case when $M$ is a toric or quasitoric manifold), then 
$H^*_T(M)$ has no $S$-torsion and $\H^*_T(M)=H^*_T(M)$.  

We denote the equivariant Poincar\'e dual of $M_i$ by $\tau_i$.  It is an element of $H^2_T(M)$ but we also regard it as an element of $\H^2_T(M)$.    The ring $\H^*_T(M)$ also has a structure of an $H^*(BT)$-algebra, via the projection map $\pi\colon ET\times_T M\to BT$.  Since $H^*(BT)$ is generated by $H^2(BT)$ as a ring, the algebra structure over $H^*(BT)$ is determined by the image of $u\in H^2(BT)$ by $\pi^*\colon H^*(BT)\to \H^*_T(M)$ and it is described as follows. 

\begin{lemm}[Lemma 1.5 in \cite{masu99}] \label{lemm:3-1}
To each $i\in [m]$, there is a unique element $v_i\in H_2(BT)$ such that  
\begin{equation} \label{eq:3-1}
\pi^*(u)=\sum_{i=1}^m \langle u,v_i\rangle \tau_i \quad \text{in $\H^2_T(M)$ for any $u\in H^2(BT)$}
\end{equation}
where $\langle\ ,\ \rangle$ denotes the natural pairing between cohomology and homology.
\end{lemm}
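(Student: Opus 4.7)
The plan is to use localization at the $T$-fixed points. Since $\H^*_T(M) = H^*_T(M)/S\text{-torsion}$ is $S$-torsion free by construction, the fixed-point restriction $\iota^*\colon \H^*_T(M) \to \bigoplus_{p \in M^T} H^*(BT)$ is injective (this is the standard consequence of Borel's localization theorem: the kernel of $H^*_T(M) \to H^*_T(M^T)$ is exactly the $S$-torsion submodule). At a fixed point $p$, $\pi^*(u)|_p = u$, while $\tau_i|_p = 0$ for $p \notin M_i$, and $\tau_i|_p = \alpha_{p,i} \in H^2(BT)$ is the equivariant Euler class of the normal bundle $\nu(M_i \subset M)$ at $p$, viewed as a complex line bundle via the omniorientation, when $p \in M_i$. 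Because the $T$-action on $M$ is smooth and effective and $M$ is connected, any subgroup of $T$ that acts trivially on a tangent space $T_pM$ must act trivially on all of $M$ by the slice theorem, so the slice representation at the isolated fixed point $p$ is a faithful $n$-dimensional complex representation of $T$; hence the $n$ weights $\{\alpha_{p,i} : p \in M_i\}$ form a $\Z$-basis of $H^2(BT) = \Hom(T, S^1)$.

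First I would expand $u = \sum_{i : p \in M_i} c_i^{(p)}(u)\,\alpha_{p,i}$ uniquely with $c_i^{(p)}(u) \in \Z$, which defines an element $v_i^{(p)} \in \Hom(H^2(BT), \Z) = H_2(BT)$ by $\langle u, v_i^{(p)} \rangle := c_i^{(p)}(u)$; equivalently, $\langle \alpha_{p,j}, v_i^{(p)} \rangle = \delta_{ij}$ for all $j$ with $p \in M_j$. The heart of the proof is showing that $v_i^{(p)}$ depends only on $i$, not on $p$. The weights $\{\alpha_{p,j} : j \neq i,\ p \in M_j\}$ are precisely the weights of $T$ on the tangent space $T_pM_i$, and these all vanish on the circle subgroup $T_i \subset T$ that fixes $M_i$ pointwise; thus $v_i^{(p)}$ lies in the rank-one sublattice $H_2(BT_i) \subset H_2(BT)$. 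The normalization $\langle \alpha_{p,i}, v_i^{(p)}\rangle = 1$, together with the fact that the restriction $\alpha_{p,i}|_{T_i}$ equals the canonical character by which $T_i$ acts on the fiber of $\nu(M_i \subset M)$ --- a character determined by the omniorientation-induced complex structure on the normal bundle and identical at every fixed point $p \in M_i^T$ --- pins down $v_i^{(p)}$ uniquely as the primitive generator of $H_2(BT_i)$ whose pairing with that character equals $+1$. Since this description is intrinsic to the characteristic submanifold $M_i$ together with its omniorientation, $v_i^{(p)}$ is independent of $p$, and we set $v_i := v_i^{(p)}$.

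With the $v_i$ in hand, the desired identity $\pi^*(u) = \sum_{i=1}^m \langle u, v_i\rangle\,\tau_i$ holds after restriction at every fixed point $p$ by the very construction, and hence holds in $\H^2_T(M)$ by the injectivity of $\iota^*$. Uniqueness is immediate: if $\{v_i'\}$ were another collection satisfying the same identity, then restricting at any $p \in M_i^T$ would force $\langle u, v_i'\rangle$ to be the $\alpha_{p,i}$-coefficient of $u$ in the basis $\{\alpha_{p,j}\}$, which equals $\langle u, v_i\rangle$, so $v_i' = v_i$. The principal obstacle is the $p$-independence of $v_i^{(p)}$, which relies essentially on the fact that the normal bundle of $M_i$ in $M$ carries a canonical complex line bundle structure (from the omniorientation) whose underlying $T_i$-character is a globally defined invariant of $M_i$; the remaining ingredients (faithfulness of the slice representation at an isolated fixed point and integral basis-ness of the tangent weights) are standard consequences of effectiveness and connectedness.
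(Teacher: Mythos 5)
Your proof is correct, and it follows essentially the same route as the source: the paper itself states this lemma without proof, citing Lemma~1.5 of \cite{masu99}, where the argument is exactly the fixed-point localization you use (injectivity of restriction to $M^T$ modulo $S$-torsion, $\tau_i|_p$ equal to the tangent weight $\alpha_{p,i}$, and the basis property of the weights at an isolated fixed point). Your characterization of $v_i$ as the primitive generator of $H_2(BT_i)$ normalized against the $T_i$-character on $\nu_i$ is precisely the content of properties (P1) and (P2) recorded in the paper immediately after the lemma, which confirms the $p$-independence step.
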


If we change the omniorientation on $M$, then $\tau_i$ may become $-\tau_i$ and hence $v_i$ may become $-v_i$; so $\pm v_i$ is independent of the choice of an omniorientation. 
Note that there are exactly $n$ characteristic submanifolds meeting at each $T$-fixed point in $M$.  

\begin{lemm} [Lemma 1.7 in \cite{masu99}] \label{lemm:3-1-1}
Let $p$ be a $T$-fixed point of a torus manifold $M$ and let $I(p)$ be the subset of $[m]$ consisting of the subscripts of the $n$ characteristic submanifolds meeting at $p$.  Then $\{ v_i\}_{i\in I(p)}$ is a basis of $H_2(BT)$.  In particular, each $v_i$ is primitive and $\{v_i\}_{i=1}^m$ spans $H_2(BT)$.
\end{lemm}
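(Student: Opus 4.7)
The plan is to restrict the identity of Lemma~\ref{lemm:3-1} to the fixed point $p$ and to combine this with the fact, forced by effectiveness, that the isotropy weights at $p$ form a $\Z$-basis of $H^2(BT)$.

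First I would apply the restriction map $\iota_p^*\colon \H^*_T(M)\to H^*_T(\{p\})=H^*(BT)$ to the formula \eqref{eq:3-1}. This is legitimate because $H^*(BT)$ is $S$-torsion free, so $\iota_p^*$ is well defined on the quotient by $S$-torsion. Since $\iota_p^*\pi^*(u)=u$, and since $\iota_p^*\tau_i=0$ whenever $p\notin M_i$, while for $i\in I(p)$ the class $\iota_p^*\tau_i$ equals the equivariant Euler class $\alpha_i\in H^2(BT)$ of the $T$-equivariant normal line bundle of $M_i$ at $p$ (a character of $T$ via the complex structure provided by the omniorientation), the restriction yields
\[
u=\sum_{i\in I(p)}\langle u,v_i\rangle\,\alpha_i\qquad\text{in $H^2(BT)$, for every $u\in H^2(BT)$.}
\]

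The main step is then to show that $\{\alpha_i\}_{i\in I(p)}$ is a $\Z$-basis of $H^2(BT)=\Hom(T,S^1)$. The $n$ normal planes to $M_i$ at $p$ ($i\in I(p)$) exhaust $T_pM$, so the closed subgroup $K:=\bigcap_{i\in I(p)}\ker\alpha_i\subset T$ acts trivially on $T_pM$. By Bochner's equivariant linearization (slice) theorem, $K$ then acts trivially on an invariant neighborhood of $p$, hence on all of $M$ by connectedness, and effectiveness of the action forces $K=\{1\}$. The resulting homomorphism $T\to (S^1)^{I(p)}$ with components $\alpha_i$ is then an injective map between compact connected Lie groups of equal dimension $n$, hence an isomorphism, so $\{\alpha_i\}_{i\in I(p)}$ is indeed a $\Z$-basis of $\Hom(T,S^1)$.

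Once this is in hand, the displayed identity of the first paragraph exhibits $\{v_i\}_{i\in I(p)}$ as the dual basis of $\{\alpha_i\}_{i\in I(p)}$ in $H_2(BT)$, so it is a $\Z$-basis, every $v_i$ with $i\in I(p)$ is primitive, and these $v_i$'s span $H_2(BT)$. Since each characteristic submanifold $M_i$ contains a $T$-fixed point by definition, every $i\in[m]$ lies in some $I(p)$, giving primitivity of every $v_i$ and the claim that $\{v_i\}_{i=1}^m$ spans $H_2(BT)$. The only delicate point I expect is upgrading $\Q$-linear independence of the isotropy weights to the $\Z$-basis statement, which is precisely where smoothness, effectiveness, and connectedness of $M$ are used jointly via the slice theorem.
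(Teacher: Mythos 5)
Your proof is correct. The paper itself gives no argument for this lemma --- it simply cites \cite[Lemma 1.7]{masu99} --- and your proof is essentially the standard one from that reference: restrict the identity \eqref{eq:3-1} to $p$, observe that the restrictions $\tau_i|_p=\alpha_i$ for $i\in I(p)$ are the weights of the tangential $T$-representation at $p$, use Bochner linearization plus effectiveness and connectedness to see that these weights form a $\Z$-basis of $H^2(BT)=\Hom(T,S^1)$ (injectivity of $T\to (S^1)^{I(p)}$ forces the determinant to be $\pm1$), and conclude that $\{v_i\}_{i\in I(p)}$ is the dual basis. The only step stated without proof, that $T_pM$ is the direct sum of the normal planes $\nu_i|_p$ for $i\in I(p)$, is exactly the fact the paper already invokes when it asserts that precisely $n$ characteristic submanifolds meet at each fixed point, so no gap remains.
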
  

We shall explain that each $v_i$ has a nice geometrical meaning.  As is well-known, there is a canonical isomorphism 
\begin{equation} \label{eq:3-2}
H_2(BT)\cong \Hom(S^1,T),
\end{equation}
where $S^1$ denotes the unit circle of the complex numbers $\C$. For $v\in H_2(BT)$, we denote by $\lambda_v\in \Hom(S^1,T)$ the element corresponding to $v$ through the isomorphism \eqref{eq:3-2}. The omniorientation on $M$ induces an orientation on the normal bundle $\nu_i$ of $M_i$ so that $\nu_i$ can be regarded as a complex line bundle because $\nu_i$ is of real dimension 2.  With this understood, one can see that the element $v_i$ has the following two properties (see (1.8) and Lemma 1.10 in \cite{masu99}): 
\begin{enumerate}
\item[(P1)] $\lambda_{v_i}(S^1)$ is the circle subgroup of $T$ which fixes $M_i$ pointwise, 
\item[(P2)] the differential of $\lambda_{v_i}(z)$ for $z\in S^1(\subset \C)$ acts on $\nu_i$ as complex multiplication by $z$. 
\end{enumerate}
There are two primitive elements which satisfy (P1), and (P2) determines one of them; so (P1) and (P2) characterize $v_i$.  

\begin{defi} \label{defi:3-1}
Let $M$ be a torus manifold with an omniorientation and let $v_i$ $(i=1,\dots,m)$ be the elements of $H_2(BT)$ defined above.  Then the root system $R(M)(\subset H^{2}(BT))$ of $M$ is defined to be $R(\{v_i\}_{i=1}^m)$.  Note that $R(M)$ is independent of the choice of the omniorientation on $M$ because so is $\pm v_i$.  
\end{defi}

Here are two examples of $R(M)$ corresponding to Example~\ref{exam:2-1} in Section~\ref{sect:2}.  

\begin{exam} \label{exam:3-1}
(1) Take $M=S^{2n}$ (regarded as the unit sphere of $\C^n\oplus\R$) with the $T$-action given by 
$(z_1,\dots,z_n,y)\to (g_1z_1,\dots,g_nz_n,y)$ 
where $(g_1,\dots,g_n)\in (S^1)^n=T$.  The characteristic submanifolds are $M_i=\{ z_i=0\}$ for $1\le i\le n$ and the circle subgroup which fixes $M_i$ pointwise is $\{(1,\dots,1,g_i,1\dots,1)\in T\mid g_i\in S^1\}$ for each $i$, so that we may think of $\{v_i\}_{i=1}^n$ as $\{e_i\}_{i=1}^n$ of $\Z^n$ through an identification of $H_2(BT)=\Hom(S^1,T)$ with $\Z^n$.  Hence 
\[
R(S^{2n})=\{\pm e_i^*\ (1\le i\le n),\ \pm e_i^*\pm e_j^*\ (1\le i<j\le n) \}
\]
where $\pm e_i^*$ are of type 1 while $\pm e_i^*\pm e_j^*$ are of type 2 as remarked in Example~\ref{exam:2-1}.

(2) Take $M=\C P^{n}$ with the $T$-action given by $[z_1,\dots,z_n,z_{n+1}]\to [g_1z_1,\dots,g_nz_n,z_{n+1}]$.  The characteristic submanifolds are $M_i=\{ z_i=0\}$ for $1\le i\le n+1$ and the circle subgroup which fixes $M_i$ pointwise is $\{(1,\dots,1,g_i,1\dots,1)\in T\mid g_i\in S^1\}$ for $1\le i\le n$ and the diagonal subgroup of $T$ for $i=n+1$, so that we may think of $\{v_i\}_{i=1}^{n+1}$ as $v_i=e_i$ for $1\le i\le n$ and $v_{n+1}=\sum_{i=1}^ne_i$ through an identification of $H_2(BT)=\Hom(S^1,T)$ with $\Z^n$. Hence 
\[
R(\C P^n)=\{\pm e_i^*\ (1\le i\le n),\ \pm(e_i^*-e_j^*) \ (1\le i<j\le n)\}
\]
where any element in $R(\C P^n)$ is of type 2 as remarked in Example~\ref{exam:2-1}.
\end{exam}

The following lemma gives a necessary condition for $R(M)$ to have an element of type 1 or of type 2.  

\begin{lemm} \label{lemm:3-2}
Let $\alpha$ be an element of $R(M)$.  If $\alpha\in R(M)$ is of type 1, then $M_i^T=M^T$ for some $i$, and if $\alpha\in R(M)$ is of type 2, then $M_i^T\cup M_j^T=M^T$ for some distinct $i$ and $j$, where $X^T$ denotes the $T$-fixed point set in $X$.  
\end{lemm}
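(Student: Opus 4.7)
The plan is to reduce the lemma to a direct application of Lemma~\ref{lemm:3-1-1}. Recall that a $T$-fixed point $p$ lies on $M_k$ if and only if $k\in I(p)$, so $M_i^T=M^T$ is equivalent to $i\in I(p)$ for every $p\in M^T$, and similarly $M_i^T\cup M_j^T=M^T$ is equivalent to $i\in I(p)$ or $j\in I(p)$ for every such $p$. Thus it suffices, given an arbitrary $T$-fixed point $p$, to rule out the possibility that the distinguished indices appearing in the definition of a type~1 or type~2 root avoid $I(p)$.

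The key observation is that $\alpha\in R(M)$ is a nonzero element of $H^2(BT)=\Hom(H_2(BT),\Z)$, since by Definition~\ref{defi:2-1} we have $|\langle\alpha,v_i\rangle|=1$ for at least one index. By Lemma~\ref{lemm:3-1-1}, $\{v_k\}_{k\in I(p)}$ is a $\Z$-basis of $H_2(BT)$. Hence if $\alpha$ vanishes on every $v_k$ with $k\in I(p)$, then $\alpha=0$, a contradiction.

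For the type~1 case, I would suppose, for contradiction, that $i\notin I(p)$ for the index $i$ singled out by $\alpha$. Then every $k\in I(p)$ satisfies $k\neq i$, so $\langle\alpha,v_k\rangle=0$ for all such $k$, which forces $\alpha=0$; this contradiction shows $i\in I(p)$, hence $p\in M_i$. For the type~2 case, I would similarly assume $i,j\notin I(p)$, observe that every $k\in I(p)$ satisfies $k\neq i,j$, and obtain $\langle\alpha,v_k\rangle=0$ for all $k\in I(p)$, again forcing $\alpha=0$. So at least one of $i,j$ lies in $I(p)$, i.e.\ $p\in M_i\cup M_j$.

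There is no real obstacle here: once the basis property at fixed points (Lemma~\ref{lemm:3-1-1}) is in hand, the nonvanishing of a root combined with the prescribed zero pattern of $\langle\alpha,v_k\rangle$ delivers the conclusion immediately. The only thing worth being careful about is that $\alpha$ is genuinely nonzero, which is built into Definition~\ref{defi:2-1} via the condition $|\langle\alpha,v_i\rangle|=1$.
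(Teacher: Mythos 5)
Your proof is correct, but it takes a different route from the paper's. You reduce everything to Lemma~\ref{lemm:3-1-1}: at a fixed point $p$ the vectors $\{v_k\}_{k\in I(p)}$ form a basis of $H_2(BT)$, so a nonzero $\alpha\in\Hom(H_2(BT),\Z)$ cannot vanish on all of them, forcing the distinguished index (or one of the two distinguished indices) of $\alpha$ into $I(p)$. The paper instead argues through equivariant cohomology: by \eqref{eq:3-1} a type~1 root satisfies $\pi^*(\alpha)=\pm\tau_i$ (and a type~2 root $\pi^*(\alpha)=\pm\tau_i\pm\tau_j$), the restriction of $\pi^*(\alpha)$ to $H^*_T(p)=H^*(BT)$ is $\alpha\neq 0$ for every $p\in M^T$, while $\tau_i$ restricts to $0$ at any $p\notin M_i^T$; hence every fixed point lies on $M_i$ (resp.\ on $M_i\cup M_j$). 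The two arguments are close cousins --- the restrictions $\tau_k|_p$ for $k\in I(p)$ are essentially dual to the basis $\{v_k\}_{k\in I(p)}$ --- but yours dispenses with the equivariant Poincar\'e duals and the restriction homomorphism altogether, needing only the pairing $\langle\ ,\ \rangle$ and the basis property at fixed points, whereas the paper's version stays entirely inside $H^*_T(M)$ and leans on \eqref{eq:3-1}. Both are complete; your identification of $M_i^T=M^T$ with ``$i\in I(p)$ for all $p\in M^T$'' is the only translation step, and it is justified since $M_i$ is $T$-invariant so $M_i^T=M_i\cap M^T$.
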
 

\begin{proof}
If $\alpha\in R(M)$ is of type 1, then $\pi^*(\alpha)=\pm\tau_{i}$ by \eqref{eq:3-1}.  The restriction map $H^*_T(M)\to \bigoplus_{p\in M^T}H_T^*(p)$ sends $\pi^*(\alpha)$ to $\alpha$ in each component (note $H^*_T(p)=H^*(BT)$), in particular, non-zero in each component.  On the other hand, $\tau_{i}$ restricts to $0$ in $H^*_T(p)$ if $p\notin M_i^T$.  Therefore, $M_i^T=M^T$.  

Similarly, if $\alpha\in R(M)$ is of type 2, then $\pi^*(\alpha)=\pm \tau_{i}\pm\tau_{j}$ by \eqref{eq:3-1}, and this implies that $M_i^T\cup M_j^T=M^T$ by the same argument as above.  
\end{proof}

If a torus manifold $M$ is $S^{2n}$ $(n\ge 2)$ with the standard $T$-action or a product of $S^{2k}$ $(k\ge 2)$ and any torus manifold of dimension $2(n-k)$, then $M_i^T=M^T$ for some $i$ while if $H^*(M)$ is generated by $H^2(M)$ as a ring, then $M_i^T\not=M^T$ for any $i$ (see \cite{ma-pa06}).  The latter condition is satisfied when $M$ is a topological toric manifold (in particular, a toric manifold or a quasitoric manifold).  Therefore, the following follows from Corollary~\ref{coro:2-1} and Lemma~\ref{lemm:3-2}. 
\begin{coro} \label{coro:3-1}
If $H^*(M)$ is generated by $H^2(M)$ as a ring, then any irreducible subsystem of $R(M)$ is of type A.
\end{coro}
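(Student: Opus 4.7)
The plan is to show that, under the hypothesis $H^*(M)=\langle H^2(M)\rangle$, the root system $R(M)$ contains no element of type 1, and then read off the conclusion from Theorem~\ref{theo:2-1} together with the observation preceding Corollary~\ref{coro:2-1}.

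First I would invoke the fact from \cite{ma-pa06} cited in the text: when $H^*(M)$ is generated by $H^2(M)$ as a ring, one has $M_i^T\ne M^T$ for every characteristic submanifold $M_i$. Combining this with Lemma~\ref{lemm:3-2} shows that $R(M)$ cannot contain any element of type 1, since such an element would force $M_i^T=M^T$ for some $i$.

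Next I would rule out the B and D possibilities allowed by Theorem~\ref{theo:2-1} for an irreducible subsystem $\Phi$ of $R(M)$ of rank at least $2$. By Theorem~\ref{theo:2-1}(1), $\Phi$ is of type B exactly when it contains a simple root of type 1; this is impossible since $R(M)$ has no type 1 roots at all. For type D, I would use the observation recorded just before Corollary~\ref{coro:2-1}: whenever $\beta,\gamma\in R(M)$ form a conjugate pair of type 2 roots, the elements $(\beta\pm\gamma)/2$ lie in $R(M)$ and are of type 1. The absence of type 1 elements therefore precludes conjugate simple roots, and Theorem~\ref{theo:2-1}(2) rules out type D. Since an irreducible subsystem of rank one is automatically of type A${}_1$, this leaves only type A.

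I do not expect any substantive obstacle here, as the corollary is a direct synthesis of Lemma~\ref{lemm:3-2}, Theorem~\ref{theo:2-1}, and the cited criterion from \cite{ma-pa06}. The only point that demands a modicum of care is that Theorem~\ref{theo:2-1} classifies irreducible \emph{subsystems} (rather than merely the irreducible factors appearing in the decomposition of $R(M)$), so the absence of type 1 elements has to be used to eliminate type B and D subsystems individually—which is exactly what parts (1) and (2) of that theorem are formulated to deliver.
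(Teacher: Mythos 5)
Your proposal is correct and follows essentially the same route as the paper: deduce from \cite{ma-pa06} that $M_i^T\neq M^T$ for all $i$, conclude via Lemma~\ref{lemm:3-2} that $R(M)$ has no type 1 elements, and then apply the classification of Theorem~\ref{theo:2-1} together with the observation that a conjugate pair $\beta,\gamma$ would produce type 1 elements $(\beta\pm\gamma)/2$ (the paper packages these last two steps as Corollary~\ref{coro:2-1}). The only difference is cosmetic: you unpack Corollary~\ref{coro:2-1} into its ingredients so as to treat arbitrary irreducible subsystems rather than factors, which is a reasonable bit of extra care but not a new argument.
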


Here is an example of $R(M)$ corresponding to Example~\ref{exam:2-2}. 

\begin{exam} \label{exam:3-2}
Suppose that a torus manifold $M$ is $4$-dimensional.  Since a characteristic submanifold is of codimension two, orientable and is assumed to have a $T$-fixed point, every characteristic submanifold of our $M$ must be a $2$-sphere and have two $T$-fixed points.  Therefore, Lemma~\ref{lemm:3-2} tells us that $R(M)$ has no element of type 1 (resp. type 2) unless $|M^T|$ is $2$ (resp. $2,3,4$), and $R(M)$ is empty if $|M^T|\ge 5$, where $|M^T|$ denotes the cardinality of $M^T$.    
\end{exam}

\begin{rema}\label{rema:3-1}
For some class of manifolds with nice torus actions such as flag manifolds and toric hyperK\"ahler manifolds (also manifolds treated in \cite{kuro10-2}, \cite{kuro11-2}), one could define root systems similarly to Definition \ref{defi:3-1}. 
\end{rema}

\subsection{Weakly equivariant diffeomorphisms}

We will show in the next section that if the $T$-action on a torus manifold $M$ extends to an effective smooth action of a connected compact Lie group $G$, then the root system of $G$ is a subsystem of the root system $R(M)$ of $M$.   In this subsection, we prepare some necessary results to prove the fact. 

Let $\Aut(T)$ denote the group of automorphisms of $T$.  A diffeomorphism $\fg$ of a torus manifold $M$ is said to be \emph{weakly equivariant} if $\fg(tp)=\rho(t)\fg(p)$ for $t\in T$ and $p\in M$ with some $\rho\in\Aut(T)$, and in this case we often say that $\fg$ is \emph{$\rho$-equivariant}.  Note that $\fg$ is weakly equivariant if and only if $\fg$ is in the normalizer of $T$ in the group $\Diff(M)$ of diffeomorphisms of $M$. 

\begin{lemm} \label{lemm:3-3}
If $\fg\in \Diff(M)$ is $\rho$-equivariant, then $\fg$ permutes the characteristic submanifolds $M_i$'s, i.e., there is a permutation $\sigma$ on $[m]$ such that $\fg(M_i)=M_{\sigma(i)}$ for any $i\in [m]$.  Moreover, $\rho_*(v_i)=\epsilon_i v_{\sigma(i)}$ with some $\epsilon_i\in \{\pm 1\}$ for any $i\in [m]$, where $\rho_*$ is an automorphism of $H_2(BT)$ induced from $\rho$. 
\end{lemm}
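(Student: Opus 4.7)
The plan is to combine the characterization (P1)--(P2) of the $v_i$'s with the $\rho$-equivariance of $\fg$.

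First, I would show that $\fg$ sends each characteristic submanifold to another one. Let $K_i := \lambda_{v_i}(S^1) \subset T$ be the circle subgroup fixing $M_i$ pointwise, as guaranteed by property (P1). For any $p \in M_i$ and $t \in K_i$, the $\rho$-equivariance gives $\fg(p) = \fg(tp) = \rho(t)\fg(p)$, so the circle $\rho(K_i)$ fixes $\fg(M_i)$ pointwise. Since $\fg$ is a diffeomorphism, $\fg(M_i)$ is a closed connected codimension-two submanifold of $M$; moreover, choosing a $T$-fixed point $p \in M_i$, its image $\fg(p) \in \fg(M_i)$ is fixed by $\rho(T) = T$. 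Hence $\fg(M_i)$ satisfies the definition of a characteristic submanifold, so $\fg(M_i) = M_{\sigma(i)}$ for a uniquely determined $\sigma(i) \in [m]$. Applying the same argument to $\fg^{-1}$, which is $\rho^{-1}$-equivariant, shows that $\sigma$ is a permutation of $[m]$.

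Next, I would identify $\rho_*(v_i)$ with $v_{\sigma(i)}$ up to sign. Under the canonical isomorphism $H_2(BT) \cong \Hom(S^1,T)$, the induced map $\rho_*$ corresponds to post-composition with $\rho$, so $\lambda_{\rho_*(v_i)} = \rho \circ \lambda_{v_i}$ and in particular $\lambda_{\rho_*(v_i)}(S^1) = \rho(K_i)$. By the previous paragraph this circle fixes $M_{\sigma(i)}$ pointwise, and by property (P1) the unique such circle is $\lambda_{v_{\sigma(i)}}(S^1)$. Thus $\rho_*(v_i)$ and $v_{\sigma(i)}$ parametrize the same circle subgroup of $T$. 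Since $v_{\sigma(i)}$ is primitive in $H_2(BT)$ by Lemma~\ref{lemm:3-1-1}, and $\rho_*(v_i)$ is primitive because $\rho_*$ is a lattice automorphism, the two elements must differ by a sign: $\rho_*(v_i) = \epsilon_i v_{\sigma(i)}$ for some $\epsilon_i \in \{\pm 1\}$.

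The core of the argument is just the translation between circle subgroups of $T$ and primitive elements of $\Hom(S^1,T)$, plus the fact that $\fg$ intertwines the fixed-point set of $K_i$ with that of $\rho(K_i)$. I do not expect any real obstacle; the one point to be attentive to is that we use only (P1), not (P2), since the omniorientation on $M$ need not be preserved by $\fg$, and the signs $\epsilon_i$ measure precisely this failure.
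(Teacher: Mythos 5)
Your proposal is correct and follows essentially the same route as the paper: the paper's proof likewise observes that $\fg(M_i)$ is fixed pointwise by $\rho(\lambda_{v_i}(S^1))=\lambda_{\rho_*(v_i)}(S^1)$ and concludes $\fg(M_i)=M_j$ with $\rho_*(v_i)=\pm v_j$. You merely spell out the details (the $T$-fixed point on $\fg(M_i)$, the use of $\fg^{-1}$ for bijectivity, and primitivity for the sign) that the paper leaves implicit.
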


\begin{proof}
Since $M_i$ is fixed pointwise by the circle subgroup $\lambda_{v_i}(S^1)$ and $\fg$ is $\rho$-equivariant, $\fg(M_i)$ is fixed pointwise by a circle subgroup $\rho(\lambda_{v_i}(S^1))$ which is $\lambda_{\rho_*(v_i)}(S^1)$.  This shows that $\fg(M_i)=M_j$ for some $j$ and $\rho_*(v_i)=v_j$ up to sign, proving the lemma. 
\end{proof}

Since $\rho$ induces a $\rho$-equivariant homeomorphism $E_\rho$ of $ET$ (with right $T$-action), a homeomorphism of $ET\times M$ sending $(e,p)$ to $(E_\rho(e), \fg(p))$ is $\rho$-equivariant so that it descends to a homeomorphism of $ET\times_T M$ and induces a graded ring automorphism $\fg^*$ of $H^*_T(M)$.  The differential $d\fg$ of $\fg$ maps $\nu_i$ to $\nu_{\sigma(i)}$ and it follows from Lemma~\ref{lemm:3-3} and the definition of $\tau_i$'s that 
\begin{equation} \label{eq:3-3}
\fg^*(\tau_{\sigma(i)})=\epsilon_i\tau_i.
\end{equation}
Note that $\epsilon_i=1$ if and only if $d\fg$ preserves the orientations on $\nu_i$ and $\nu_{\sigma(i)}$ induced from the omniorientation on $M$.

The following lemma, which is essentially due to Wiemeler \cite{wiem12}, will play a key role in our argument.  

\begin{lemm}[Lemma 2.1 in \cite{wiem12}] \label{lemm:3-4}
If the $\rho$-equivariant diffeomorphism $\fg$ of a torus manifold $M$ is in the identity component of $\Diff(M)$ and $\rho^*$ is a reflection on $H^2(BT)$, then the permutation $\sigma$ in Lemma~\ref{lemm:3-3} satisfies either of the following:
\begin{enumerate}
\item $\sigma$ is the identity, $\epsilon_i=-1$ for some $i$ and $\epsilon_j=1$ for $j\not=i$, 
\item $\sigma$ permutes two elements, say $i$ and $j$, fixes the others, $\epsilon_i=\epsilon_j$ and $\epsilon_k=1$ for $k\not=i,j$.
\end{enumerate}
Moreover, if $\fg$ preserves the omniorientation on $M$ (this is the case when $G$ preserves a stable complex structure on $M$ discussed in Section~\ref{sect:5}), then (1) above does not occur by the remark after \eqref{eq:3-3}.  
\end{lemm}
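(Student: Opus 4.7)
The plan is to combine two kinds of constraints on the pair $(\sigma,\{\epsilon_i\})$: the algebraic constraint that $\rho_*^2=\mathrm{id}$ on $H_2(BT)$ (because $\rho^*$ is a reflection, hence of order two), and the topological constraint that $\fg^*=\mathrm{id}$ on $H^*(M;\mathbb{Z})$ (because $\fg$ lies in the identity component of $\Diff(M)$, hence is homotopic to the identity).

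First, applying $\rho_*^2=\mathrm{id}$ to the identity $\rho_*(v_i)=\epsilon_i v_{\sigma(i)}$ from Lemma~\ref{lemm:3-3} yields $v_i=\epsilon_i\epsilon_{\sigma(i)}v_{\sigma^2(i)}$; primitivity of each $v_j$ (Lemma~\ref{lemm:3-1-1}) then forces $\sigma^2=\mathrm{id}$ and $\epsilon_{\sigma(i)}=\epsilon_i$. Hence $\sigma$ is an involution whose cycle type consists of fixed points and disjoint transpositions, and $\epsilon$ is constant on each $\sigma$-orbit. Next, let $E_-\subset H_2(BT)_\mathbb{Q}$ denote the $(-1)$-eigenspace of $\rho_*$; since $\rho^*$ is a reflection, $\dim_\mathbb{Q} E_-=1$. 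Each ``nontrivial'' $\sigma$-orbit produces an element of $E_-$: a fixed index $i$ with $\epsilon_i=-1$ contributes $v_i$, and a transposition $\{i,\sigma(i)\}$ contributes $v_i-\epsilon_i v_{\sigma(i)}$. Since $\dim E_-=1$, all such contributions must be $\mathbb{Q}$-proportional.

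The main obstacle is to strengthen this proportionality to the sharper statement that \emph{only one} nontrivial orbit exists. For this I would invoke $\fg^*=\mathrm{id}$ on $H^*(M;\mathbb{Z})$: passing equation~\eqref{eq:3-3} to the fiber via the restriction map $H^*_T(M)\to H^*(M)$ gives the identity $\bar\tau_{\sigma(i)}=\epsilon_i\bar\tau_i$ in $H^2(M)$, where $\bar\tau_i$ denotes the image of $\tau_i$. These identities, combined with the linear relations $\sum_i\langle u,v_i\rangle\bar\tau_i=0$ in $H^2(M)$ obtained by restricting~\eqref{eq:3-1} to the fiber (valid since the composition $H^{>0}(BT)\stackrel{\pi^*}{\longrightarrow}H^*_T(M)\to H^*(M)$ vanishes), should rule out the simultaneous presence of two distinct nontrivial $\sigma$-orbits: their joint presence would force a second, linearly independent $(-1)$-eigenvector of $\rho_*$ inside $H_2(BT)$, contradicting $\dim E_-=1$. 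Once a single nontrivial orbit has been isolated, it is either a fixed point (yielding case~(1)) or a transposition (yielding case~(2)).

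For the final ``moreover'' clause, if $\fg$ preserves the omniorientation of $M$ then by definition $d\fg$ preserves the orientation on each normal bundle $\nu_i$, so every $\epsilon_i=+1$ by the remark following~\eqref{eq:3-3}. This immediately excludes case~(1), which requires some $\epsilon_i=-1$.
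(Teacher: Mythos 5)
Your first step (using $\rho_*^2=\mathrm{id}$ to get $\sigma^2=\mathrm{id}$ and $\epsilon_{\sigma(i)}=\epsilon_i$) is correct and does account for the claim $\epsilon_i=\epsilon_j$ in case (2), and your ``moreover'' clause is fine. The genuine gap is in the central quantitative step: showing that there is \emph{exactly one} nontrivial $\sigma$-orbit. Your proposed mechanism --- that two nontrivial orbits would produce two linearly independent vectors in the $(-1)$-eigenspace $E_-$ of $\rho_*$ on $H_2(BT)$ --- does not work, because the contributions $v_i$ (resp.\ $v_i-\epsilon_i v_{\sigma(i)}$) attached to distinct orbits need not be linearly independent: distinct characteristic submanifolds can have $v_{i'}=\pm v_i$ (e.g.\ the two poles of an $S^2$-factor give $v$'s that are negatives of each other), so two fixed indices with $\epsilon=-1$, or a fixed index together with a transposition, can all feed proportional vectors into the one-dimensional $E_-$ without contradiction. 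You acknowledge the gap and say the relations $\bar\tau_{\sigma(i)}=\epsilon_i\bar\tau_i$ and $\sum_i\langle u,v_i\rangle\bar\tau_i=0$ in $H^2(M)$ ``should'' close it, but you never exhibit the contradiction, and in $H^2(M)$ the classes $\bar\tau_i$ may satisfy further relations (or vanish), so nothing forces a second independent $(-1)$-eigenvector there either.

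The paper closes this gap by a trace computation on $H^2_T(M)$ rather than an eigenvector count on $H_2(BT)$. From the exact sequence \eqref{eq:3-4} and the facts that $\fg^*=\rho^*$ on $\pi^*H^2(BT)$ (trace $n-2$) and $\fg^*=\mathrm{id}$ on $\iota^*(H^2_T(M))$, one gets $\trace(\fg^*,H^2_T(M))=\rank_\Z H^2_T(M)-2$; since $\fg^*$ is the signed permutation matrix \eqref{eq:3-3} in the basis $\{\tau_i\}$, its trace is $\sum_{\sigma(k)=k}\epsilon_k$, and the only way this equals $m-2$ is one fixed index with $\epsilon=-1$ and all else trivial, or one transposition and all fixed indices with $\epsilon=+1$. (Equivalently: the $(-1)$-eigenspace of $\fg^*$ on $H^2_T(M)\otimes\Q$, whose dimension equals the number of nontrivial orbits, lands inside $\pi^*H^2(BT)\otimes\Q$ where $\fg^*$ is a reflection, so it is at most one-dimensional --- this is the correct version of your eigenspace idea, carried out upstairs where the $\tau_i$ are linearly independent.) There is also a second, smaller omission in your write-up: for a general torus manifold $H^2_T(M)$ is \emph{not} freely generated by the $\tau_i$, so one must first show $\fg^*$ is the identity on the $S$-torsion (which injects into $H^2(M)$ by \eqref{eq:3-4}) and pass to $\H^2_T(M)$, where \cite[Lemma 3.2]{masu99} supplies the free basis; your argument implicitly assumes the $\bar\tau_i$ carry enough information in $H^2(M)$, which is exactly what fails without this reduction.
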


\begin{proof} 
We shall reproduce the proof in \cite{wiem12} with some modification for the reader's convenience.  

First we will treat the important case where $H^{odd}(M)=0$.  In this case, it is proved in \cite{masu99} that $H^2_T(M)$ is freely generated by $\tau_i$'s over $\Z$ and the following is exact:
\begin{equation} \label{eq:3-4}
0\to H^2(BT)\stackrel{\pi^*}\longrightarrow H^2_T(M)\stackrel{\iota^*}\longrightarrow H^2(M)
\end{equation}
where $\iota\colon M\to ET\times_TM$ is an inclusion of the fiber. Note that $\fg^*=\rho^*$ on $H^2(BT)$, which follows from the definition of $g^*$ on $H^*_T(M)$ mentioned after the proof of Lemma~\ref{lemm:3-3}. Moreover, $\rho^*$ is a reflection on $H^2(BT)$ by assumption and 
$\fg^*$ is the identity on $H^2(M)$ because $g$ is assumed to be in the identity component of $\Diff(M)$.  It follows from the exactness of \eqref{eq:3-4} that we have 
\begin{equation} \label{eq:3-5}
\begin{split}
\trace(\fg^*,H^2_T(M))&=\trace(\rho^*,H^2(BT))+\trace(\fg^*,\iota^*(H^2_T(M)))\\
&=\rank_\Z H^2(BT)-2+\rank_\Z \iota^*(H^2_T(M))\\
&=\rank_\Z H^2_T(M)-2.
\end{split}
\end{equation}
Since $H^2_T(M)$ is freely generated by $\tau_i$'s over $\Z$, the identity \eqref{eq:3-5} together with \eqref{eq:3-3} implies that either (1) or (2) in the lemma must occur. 

For a general torus manifold $M$, the argument above up to \eqref{eq:3-5} still holds but $H^2_T(M)$ may not be freely generated by $\tau_i$'s. Remember that $S=H^{>0}(BT)$ and $\H^*_T(M)=H^*_T(M)/S\text{-torsion}$. Since the subring $H^*(BT)$ of $H^*_T(M)$ is $S$-torsion free, we have 
\[
H^2(BT)\cap (S\text{-torsion in $H^2_T(M)$})=\{0\}.
\]
Therefore, it follows from the exactness of \eqref{eq:3-4} that the $S$-torsion in $H^2_T(M)$ injects to $H^2(M)$ by $\iota^*$.  Since $\fg^*$ is the identity on $H^2(M)$, this shows that $g^*$ is also the identity on the $S$-torsion.  Therefore, it follows from \eqref{eq:3-5} that we have 
\begin{equation} \label{eq:3-6}
\begin{split}
\trace(\fg^*,\H^2_T(M))&=\trace(\fg^*,H^2_T(M))-\rank_\Z(\text{$S$-torsion in $H^2_T(M)$})\\
&=\rank_\Z H^2_T(M)-2-\rank_\Z(\text{$S$-torsion in $H^2_T(M)$})\\
&=\rank_\Z \H^2_T(M)-2.
 \end{split}
 \end{equation}
 Since $\H_2^T(M)$ is freely generated by $\tau_i$'s over $\Z$ (\cite[Lemma 3.2]{masu99}), the identity \eqref{eq:3-6} together with \eqref{eq:3-3} implies the lemma.  
\end{proof}

\section{Root systems of extended compact Lie group actions} \label{sect:4}

In this section, we will prove that if a connected compact Lie group $G$ acts on a torus manifold $M$ extending the $T$-action, then the root system of $G$ is a subsystem of the root system $R(M)$ of $M$ (Theorem~\ref{theo:4-1}).  This fact gives an \lq\lq upper bound" for such $G$.

We recall a canonical isomorphism
\[
H^2(BT)\cong \Hom(T,S^1).
\]
For $\alpha\in H^2(BT)$, we denote by $\chi^\alpha\in \Hom(T,S^1)$ the element corresponding to $\alpha$ through the isomorphism. 

\begin{defi}
\label{defi:4-1}
A root of $G$ is a non-zero weight of the adjoint representation of $T$ on $\g\otimes\C$, where $\g$ denotes the Lie algebra of $G$.  We think of a root of $G$ as an element of $H^2(BT)$ (through the above canonical isomorphism) and denote the set of roots of $G$ by $\RG$.
\end{defi}

For $\alpha\in \RG$, we denote by $T_\alpha$ the identity component of the kernel of $\chi^\alpha$.  Let $\G0_\alpha$ be the identity component of the subgroup of $\G0$ which commutes with $T_\alpha$.  The group $N_{\G0_\alpha}(T)/T$ is of order two and the automorphism of $T$ defined by $t\to gtg^{-1}$ for $g\in N_{\G0_\alpha}(T)\backslash T$ does not depend on the choice of $g$, so we may denote it by $\rho_\alpha$. It is of order two, its fixed point set contains the codimension-one subtorus $T_\alpha$ and $\rho_\alpha^*(\alpha)=-\alpha$.  We note that $\rho_\alpha^*$ is the Weyl group action associated to $\alpha\in \RG$. 

Similarly, ${\rho_\alpha}_*$ is a reflection on $H_2(BT)$ and we note that
\begin{equation} \label{eq:4-1}
\text{$\Fix({\rho_\alpha}_*)=H_2(BT_\alpha)=\Ker\alpha$\quad in $H_2(BT)$}
\end{equation}
where $\alpha\in H^2(BT)$ is regarded as an element of $\Hom(H_2(BT),\Z)$. 

\begin{lemm} \label{lemm:4-1}
Either of the following holds for $\alpha\in\RG$:
\begin{enumerate}
\item There is $i\in [m]$ such that $\alpha$ takes a non-zero value on $v_i$ and zero on the others. 
\item There are $i, j\in [m]$ such that 
\begin{equation} \label{eq:4-2}
\langle \alpha,v_i\rangle=-\epsilon_\alpha\langle \alpha, v_j\rangle\not=0\ 
\text{and}\ \langle \alpha,v_k\rangle=0 \ \text{for any $k\not=i,j$}
\end{equation}
where $\epsilona=\pm 1$. 
\end{enumerate}
If an element of $N_{\G0_\alpha}(T)\backslash T$ preserves the omniorientation on $M$ (this is the case when $G$ preserves a stable complex structure on $M$ discussed in Section~\ref{sect:5}), then (1) above does not occur and $\epsilona=1$.  
\end{lemm}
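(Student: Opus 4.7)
The plan is to choose any $g\in N_{\G0_\alpha}(T)\setminus T$, observe it is a $\rho_\alpha$-equivariant diffeomorphism lying in the identity component of $\Diff(M)$, and feed it into Lemma~\ref{lemm:3-4}. The two alternatives in Lemma~\ref{lemm:3-4} will correspond to the two alternatives in the present statement.

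To apply Lemma~\ref{lemm:3-4} I first need to check its hypotheses. Since $\rho_\alpha$ is an involution of $T$ whose fixed-point set contains the codimension-one subtorus $T_\alpha$, the dual map $\rho_\alpha^*$ on $H^2(BT)$ is a reflection; and $g$, being an element of the connected Lie group $G\subset\Diff(M)$, automatically belongs to the identity component of $\Diff(M)$. Lemma~\ref{lemm:3-4} then produces a permutation $\sigma$ of $[m]$ and signs $\{\epsilon_i\}$ of one of the two types described there, and Lemma~\ref{lemm:3-3} records these as $\rho_{\alpha*}(v_k)=\epsilon_k v_{\sigma(k)}$.

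The next step is to translate $\sigma$ and the $\epsilon_i$'s into data about the pairings $\langle \alpha, v_k\rangle$, using two basic identities: the reflection equation $\rho_\alpha^*(\alpha)=-\alpha$, which gives $\langle \alpha, \rho_{\alpha*}v\rangle = -\langle \alpha, v\rangle$ for every $v\in H_2(BT)$, together with $\Fix(\rho_{\alpha*})=\Ker\alpha$ from \eqref{eq:4-1}. In case~(1) of Lemma~\ref{lemm:3-4} (i.e.\ $\sigma=\mathrm{id}$ with a unique $i$ having $\epsilon_i=-1$), each $v_k$ for $k\neq i$ is fixed by $\rho_{\alpha*}$ and hence lies in $\Ker\alpha$, while $\rho_{\alpha*}(v_i)=-v_i$ with $v_i\neq 0$ (primitivity from Lemma~\ref{lemm:3-1-1}) forces $v_i\notin\Ker\alpha$; this is exactly alternative~(1) of the lemma. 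In case~(2) (transposition of $i$ and $j$, with $\epsilon_i=\epsilon_j$), the vectors $v_k$ for $k\neq i,j$ are again fixed and so annihilated by $\alpha$, and applying the reflection identity to $v_i$ yields $-\langle\alpha, v_i\rangle = \epsilon_i\langle\alpha, v_j\rangle$, giving alternative~(2) with $\epsilona=\epsilon_i$. Non-vanishing of the common value $\langle\alpha,v_i\rangle$ follows because $\{v_k\}$ spans $H_2(BT)$ by Lemma~\ref{lemm:3-1-1}, so vanishing of all pairings would force $\alpha=0$.

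For the omniorientation addendum: if some $g\in N_{\G0_\alpha}(T)\setminus T$ preserves the omniorientation, then the remark following \eqref{eq:3-3} implies every $\epsilon_k$ equals $+1$, so case~(1) of Lemma~\ref{lemm:3-4} is excluded and in case~(2) we necessarily get $\epsilona=\epsilon_i=1$. No substantial obstacle is anticipated here: the argument is essentially bookkeeping, converting the output of Lemma~\ref{lemm:3-4} through the reflection $\rho_\alpha$. The only mildly delicate point is confirming that $\rho_\alpha^*$ is a reflection on $H^2(BT)$, which follows at once from $\rho_\alpha$ being a non-trivial involutive automorphism of $T$ whose fixed-point set is a subtorus of codimension one.
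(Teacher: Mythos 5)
Your proposal is correct and follows essentially the same route as the paper: apply Lemma~\ref{lemm:3-4} to an element $g\in N_{\G0_\alpha}(T)\backslash T$, translate the two cases via Lemma~\ref{lemm:3-3} and \eqref{eq:4-1}, and use $\rho_\alpha^*(\alpha)=-\alpha$ to obtain the sign relation in case (2). The only (harmless) additions are your explicit verification that $\rho_\alpha^*$ is a reflection and the spanning argument for non-vanishing, both of which the paper leaves implicit.
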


\begin{proof}
Let $g\in N_{\G0_\alpha}(T)\backslash T$. We note that $g$ is a $\rho_\alpha$-equivariant diffeomorphism of $M$ and is in the identity component of $\Diff(M)$, so that we can apply Lemma~\ref{lemm:3-4} to our $g$. 

For case (1) of Lemma~\ref{lemm:3-4}, it follows from Lemma~\ref{lemm:3-3} that there is $i\in [m]$ such that 
\begin{equation} \label{eq:4-3}
{\rho_\alpha}_*(v_i)=-v_i\quad\text{and}\quad {\rho_\alpha}_*(v_k)=v_k\ \text{for $k\not=i$,}
\end{equation}
and this together with \eqref{eq:4-1} shows that (1) in our lemma occurs in this case. 

Similarly, for case (2) of Lemma~\ref{lemm:3-4}, it follows from Lemma~\ref{lemm:3-3} that there are distinct $i$ and $j$ such that 
\begin{equation} \label{eq:4-4}
{\rho_\alpha}_*(v_i)=\epsilon_i v_j,\quad {\rho_\alpha}_*(v_j)=\epsilon_j v_i, \quad 
{\rho_\alpha}_*(v_k)=v_k \ \text{for any $k\not=i,j$}
\end{equation}
and $\langle\alpha,v_k\rangle=0$ for $k\not=i,j$ by \eqref{eq:4-1}, where since $\epsilon_i=\epsilon_j$, we denote them by $\epsilona$.  Finally, since $\rho_\alpha^*(\alpha)=-\alpha$, we have
\[
\langle \alpha,v_i\rangle=-\langle \rho_\alpha^*(\alpha),v_i\rangle=-\langle \alpha,
{\rho_\alpha}_*(v_i)\rangle=-\epsilona\langle\alpha,v_j\rangle
\]
proving (2). 

The last statement in the lemma follows from the last statement in Lemma~\ref{lemm:3-4}.  
\end{proof}

We will say that $\alpha$ is of type 1 (resp. of type 2) if (1) (resp. (2)) in Lemma~\ref{lemm:4-1} occurs. 
Following \eqref{eq:2-4}, we define a positive definite symmetric bilinear form $(\ ,\ )$ on $H^2(BT)$ as follows:
\begin{equation} 
(\alpha,\beta):=\sum_{i=1}^m \langle \alpha, v_i\rangle\langle \beta,v_i\rangle\quad\text{for $\alpha,\beta\in H^2(BT)$}.
\end{equation}

\begin{lemm} \label{lemm:4-2}
Let $\alpha, \beta\in \RG$.  Then 
\begin{equation} \label{eq:4-5}
\rho_\alpha^*(\beta)=\beta-\frac{2(\beta,\alpha)}{(\alpha,\alpha)}\alpha.
\end{equation}
\end{lemm}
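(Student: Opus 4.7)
The plan is to verify the identity by pairing both sides with each of the generators $v_1,\dots,v_m$ of $H_2(BT)$ and showing the two sides give the same integer in each case. Since $\{v_i\}_{i=1}^m$ spans $H_2(BT)$ by Lemma~\ref{lemm:3-1-1}, an element of $H^2(BT)=\Hom(H_2(BT),\Z)$ is determined by its values on the $v_i$'s, so checking the pairing for every $k\in[m]$ suffices. A convenient starting observation is that
\[
\langle \rho_\alpha^*(\beta), v_k\rangle=\langle \beta,(\rho_\alpha)_*(v_k)\rangle,
\]
so the action of $(\rho_\alpha)_*$ on each $v_k$, which was already determined in the proof of Lemma~\ref{lemm:4-1}, controls the left-hand side completely.

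First I would split into the two cases provided by Lemma~\ref{lemm:4-1}. In the type~1 case, \eqref{eq:4-3} gives $(\rho_\alpha)_*(v_i)=-v_i$ and $(\rho_\alpha)_*(v_k)=v_k$ for $k\neq i$, so that $\rho_\alpha^*(\beta)-\beta$ pairs to $-2\langle\beta,v_i\rangle$ with $v_i$ and to $0$ with all other $v_k$. On the other hand, since $\alpha$ vanishes on every $v_k$ with $k\neq i$, the bilinear form gives $(\alpha,\alpha)=\langle\alpha,v_i\rangle^2$ and $(\beta,\alpha)=\langle\beta,v_i\rangle\langle\alpha,v_i\rangle$; pairing $-\frac{2(\beta,\alpha)}{(\alpha,\alpha)}\alpha$ with $v_i$ yields exactly $-2\langle\beta,v_i\rangle$, and the pairing with the other $v_k$ is $0$. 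The two sides therefore agree on every $v_k$.

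In the type~2 case, I would use \eqref{eq:4-4} together with \eqref{eq:4-2}, namely $\langle\alpha,v_j\rangle=-\epsilon_\alpha\langle\alpha,v_i\rangle$, to compute
\[
(\alpha,\alpha)=\langle\alpha,v_i\rangle^2+\langle\alpha,v_j\rangle^2=2\langle\alpha,v_i\rangle^2
\]
and
\[
(\beta,\alpha)=\langle\alpha,v_i\rangle\bigl(\langle\beta,v_i\rangle-\epsilon_\alpha\langle\beta,v_j\rangle\bigr),
\]
so that $\frac{2(\beta,\alpha)}{(\alpha,\alpha)}=\bigl(\langle\beta,v_i\rangle-\epsilon_\alpha\langle\beta,v_j\rangle\bigr)/\langle\alpha,v_i\rangle$. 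On the other side, from $(\rho_\alpha)_*(v_i)=\epsilon_\alpha v_j$ and $(\rho_\alpha)_*(v_j)=\epsilon_\alpha v_i$ I would compute that $\rho_\alpha^*(\beta)-\beta$ pairs to $\epsilon_\alpha\langle\beta,v_j\rangle-\langle\beta,v_i\rangle$ with $v_i$, to $\epsilon_\alpha\langle\beta,v_i\rangle-\langle\beta,v_j\rangle$ with $v_j$, and to $0$ with the remaining $v_k$. A direct substitution, using once more $\langle\alpha,v_j\rangle=-\epsilon_\alpha\langle\alpha,v_i\rangle$, shows that $-\frac{2(\beta,\alpha)}{(\alpha,\alpha)}\alpha$ yields exactly these same values on $v_i,v_j$ and $0$ elsewhere.

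The only delicate point to flag is that the formula implicitly asserts that $2(\beta,\alpha)/(\alpha,\alpha)$ is an integer; this is automatic from the above, since its values against the spanning set $\{v_k\}$ of $H_2(BT)$ come out integral. Beyond that, the proof is a bookkeeping exercise: the main step is really just the correct identification of the action of $(\rho_\alpha)_*$ on the $v_i$'s, which is already packaged in Lemmas~\ref{lemm:3-4} and~\ref{lemm:4-1}, so no genuine obstacle appears.
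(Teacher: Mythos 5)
Your proof is correct and follows the same route as the paper: evaluate both sides of \eqref{eq:4-5} against the spanning set $\{v_k\}$ and use the description of $(\rho_\alpha)_*$ from \eqref{eq:4-3} and \eqref{eq:4-4}; the paper carries out the type~1 case explicitly and leaves type~2 to the reader, which you have written out correctly. The only quibble is your closing remark on integrality: $c\langle\alpha,v_k\rangle\in\Z$ for all $k$ does not by itself force $c=2(\beta,\alpha)/(\alpha,\alpha)\in\Z$ unless $\alpha$ is primitive, and the paper instead derives integrality from the fact that $\rho_\alpha^*$ is the Weyl reflection associated to $\alpha$ --- but this is a side remark, not part of the lemma's assertion.
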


\begin{proof}
Since $\{v_k\}_{k=1}^m$ spans $H_2(BT)$, it suffices to check that the both sides at \eqref{eq:4-5} evaluated on $v_k$ agree for each $k\in [m]$.  

Suppose that $\alpha$ is of type 1. Then, it follows from \eqref{eq:4-3} that
\[
\langle \rho_\alpha^*(\beta),v_k\rangle=\langle\beta,{\rho_\alpha}_*(v_k)\rangle= \begin{cases} -\langle\beta,v_i\rangle \quad&\text{if $k=i$},\\
\langle\beta,v_k\rangle\quad&\text{if $k\not=i$},\end{cases}
\]
while since $\langle \alpha,v_k\rangle$ is non-zero only when $k=i$, the right hand side at \eqref{eq:4-5} evaluated on $v_k$ reduces to 
\[
\langle\beta,v_k\rangle-2\langle\beta,v_i\rangle= \begin{cases} -\langle\beta,v_i\rangle \quad&\text{if $k=i$},\\
\langle\beta,v_k\rangle\quad&\text{if $k\not=i$},\end{cases}
\]
proving \eqref{eq:4-5} when $\alpha$ is of type 1. 
When $\alpha$ is of type 2, the desired result can easily be checked using \eqref{eq:4-2} and \eqref{eq:4-4} and we leave the check to the reader.  
\end{proof}

We note that $2(\beta,\alpha)/(\alpha,\alpha)$ in \eqref{eq:4-5} is an integer because $\rho_\alpha^*$ is the Weyl group action associated to $\alpha$.  We denote $|\langle\alpha,v_i\rangle|$ ($=|\langle\alpha,v_j\rangle|$ when $\alpha$ is of type 2) by $N_\alpha$, and denote the length of a root $\alpha$, that is $\sqrt{(\alpha,\alpha)}$, by $\|\alpha\|$.  

\begin{lemm} \label{lemm:4-3}
Let $\alpha$ and $\beta$ be roots in an irreducible factor $\Phi$ of $\RG$ and assume $\alpha\not=\pm \beta$. Then the following holds.
\begin{enumerate}
\item If both $\alpha$ and $\beta$ are of type 1, then $(\alpha,\beta)=0$.
\item If both $\alpha$ and $\beta$ are of type 2 and $(\alpha,\beta)\not=0$, then $N_\alpha=N_\beta$ and $\|\alpha\|=\|\beta\|$.
\item Suppose that $\rank\Phi\ge 3$.  If $\alpha$ is of type 1 and $\beta$ is of type 2 and $(\alpha,\beta)\not=0$, then $N_\alpha=N_\beta$ and $\sqrt{2}\|\alpha\|=\|\beta\|$.
\end{enumerate} 
\end{lemm}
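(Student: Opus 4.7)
All three parts follow the same recipe: write each root via its support on the $v_k$'s (as permitted by Lemma~\ref{lemm:4-1}), expand the inner product via $(\alpha,\beta)=\sum_{k=1}^m\langle\alpha,v_k\rangle\langle\beta,v_k\rangle$, and feed the result into the integrality of the Cartan numbers $2(\beta,\alpha)/(\alpha,\alpha)$ and $2(\alpha,\beta)/(\beta,\beta)$ supplied by Lemma~\ref{lemm:4-2}.

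For part~(1), two type-1 roots with disjoint supports are immediately orthogonal, while a shared single support forces $\alpha$ and $\beta$ into a rank-one subspace of $H^2(BT)$; reducedness of $\Phi$ then yields $\beta=\pm\alpha$, contradicting the hypothesis $\alpha\ne\pm\beta$. For part~(2), nonzero inner product forces the two-element supports to overlap. If they coincide, both roots lie in a rank-two sublattice and must each take the form $\pm N(\pm e_{i_1}^*\pm e_{i_2}^*)$; a sign comparison shows they are either proportional or orthogonal, both contradictions. In the remaining case of a single shared index, one gets $(\alpha,\beta)=\pm N_\alpha N_\beta$, and the integrality of $\pm N_\beta/N_\alpha$ and $\pm N_\alpha/N_\beta$ forces $N_\alpha=N_\beta$, from which $\|\alpha\|=\|\beta\|$ is immediate.

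For part~(3), the single-element support of $\alpha$ must lie inside the support of $\beta$, yielding $(\alpha,\beta)=\pm N_\alpha N_\beta$. The Cartan numbers $\pm 2N_\beta/N_\alpha$ and $\pm N_\alpha/N_\beta$ being integers give $N_\beta\mid N_\alpha$ and $N_\alpha\mid 2N_\beta$, hence $N_\alpha\in\{N_\beta,\,2N_\beta\}$. The heart of the argument is to rule out $N_\alpha=2N_\beta$. Were that to hold, the two-dimensional subsystem generated by $\alpha,\beta$ would be a $B_2=C_2$ configuration with $\alpha$ long and $\beta$ short. The hypothesis $\rank\Phi\ge 3$, together with irreducibility of $\Phi$, produces a root $\gamma\in\Phi$ outside this plane that is non-orthogonal to some root of the subsystem. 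Part~(1) rules out $\gamma$ being of type~1 and non-orthogonal to a type-1 root in the plane, and the Cartan analysis (combined with the fact that an irreducible root system has at most two root lengths, so a third length $2\sqrt{2}\,N_\beta$ is forbidden) then forces $\gamma$ to be of type~2 with $N_\gamma=N_\beta$ and a fresh index $k'$ in its support. Reflecting $\alpha$ through $\gamma$ yields a further type-1 root with $N=2N_\beta$ on the index $k'$, and reflecting $\beta$ through $\gamma$ yields a type-2 root with support $\{j,k'\}$; this pair satisfies the same exceptional ratio on a new pair of indices.

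The main obstacle is closing this self-replicating configuration into a concrete contradiction. The natural target is to exhibit a root of $\Phi$---constructed as an appropriate integer combination or Weyl conjugate of the roots already produced---whose $v_k$-support has three or more nonzero entries, violating the type-1-or-type-2 dichotomy of Lemma~\ref{lemm:4-1}. The rank-$\ge 3$ hypothesis is essential precisely because it supplies the extra index $k'$ required to assemble such an over-determined root.
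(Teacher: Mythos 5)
Your parts (1) and (2) are correct and essentially identical to the paper's argument. The gap is in part (3): you correctly reduce the problem to excluding the case $N_\alpha=2N_\beta$, but you never actually exclude it --- you describe a ``self-replicating configuration'' and name a ``natural target'' (a root whose support has three or more indices) without producing one. Moreover, this is not a fixable omission within the strategy you chose, because no purely combinatorial contradiction exists. Take $N=\Z^3$, $\{v_k\}=\{e_1,e_2,e_3\}$, and $\Phi=\{\pm 2e_i^*\}\cup\{\pm e_i^*\pm e_j^*\}$: this is a root system of type $\mathrm{C}_3$ in which every long root is of type 1 with $N=2$, every short root is of type 2 with $N=1$, the form $(\ ,\ )$ built from the $v_k$'s gives the correct Cartan integers, and the reflections preserve the configuration. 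All the combinatorial constraints supplied by Lemmas~\ref{lemm:4-1} and~\ref{lemm:4-2} hold, yet no reflection ever creates a root with a third nonzero entry (in your notation $\gamma$ is orthogonal to $\alpha$, so reflecting $\alpha$ through $\gamma$ does nothing new). The obstruction therefore cannot be read off from the evaluation vectors alone.

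The paper's proof of (3) is geometric at exactly this point. It first observes that in case (b) the type-1 roots would be long and, by part (1), pairwise orthogonal, so $\Phi$ would have to be of type C. It then takes the simple subgroup $K$ of $G$ corresponding to $\Phi$, a $T$-fixed point $p$, and the subtorus $L$ (of rank $n-\rank\Phi$) killed by all characters in $\Phi$; since $L$ commutes with $K$ and fixes $p$, one gets $\tau_p(Kp)\subset(\tau_pM)^L$ and hence $\dim Kp\le 2\rank\Phi$. If $Kp=\{p\}$, then $K$ embeds in $\SO(2\ell)$ with $\ell=\rank\Phi$, impossible for a type-C group of rank $\ell$; otherwise $Kp$ is shown to be a $2\ell$-dimensional homogeneous torus manifold, and the classification of homogeneous torus manifolds in \cite{kuro10} rules out type C. Some input of this kind --- using that $\Phi$ comes from an actual group acting on the manifold, not merely from a finite set $\{v_k\}$ --- is unavoidable, and it is precisely what your proposal is missing.
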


\begin{proof}
(1) It suffices to prove that there is no common $v_i$ on which both $\alpha$ and $\beta$ take non-zero values.  If there is such a common $v_i$, then $\alpha$ and $\beta$ take zero on all the $v_j$'s different from $v_i$ because they are of type 1, which means that $\alpha$ and $\beta$ are linearly dependent and hence $\alpha=\pm\beta$ since they are roots, but it is assumed that $\alpha\not=\pm\beta$.     

(2) Since $(\alpha,\beta)\not=0$ and $\alpha\not=\pm \beta$, there is only one common $v_i$ on which both $\alpha$ and $\beta$ take non-zero values.  Moreover, since $\alpha$ and $\beta$ are of type 2, $(\alpha,\alpha)=2\langle\alpha,v_i\rangle^2$ and $(\beta,\beta)=2\langle\beta,v_i\rangle^2$ by Lemma~\ref{lemm:4-1}.  Therefore, we have   
\[
\frac{2(\beta,\alpha)}{(\alpha,\alpha)}=\frac{\langle\beta,v_i\rangle}{\langle\alpha,v_i\rangle},\quad 
 \frac{2(\beta,\alpha)}{(\beta,\beta)}=\frac{\langle\alpha,v_i\rangle}{\langle\beta,v_i\rangle}.
 \]
Since these numbers are integers as remarked above, we have $N_\alpha=N_\beta$ and $\|\alpha\|=\|\beta\|$.  

(3) Since $\alpha$ is of type 1, there is a unique $v_i$ on which $\alpha$ takes a non-zero value.  Moreover, since $(\alpha,\beta)\not=0$, $\beta$ also takes a non-zero value on the $v_i$ and since $\beta$ is of type 2, $(\beta,\beta)=2\langle\beta,v_i\rangle^2$ by Lemma~\ref{lemm:4-1}.  Therefore we have 
\[
\frac{2(\beta,\alpha)}{(\alpha,\alpha)}=\frac{2\langle\beta,v_i\rangle}{\langle\alpha,v_i\rangle},\quad 
 \frac{2(\beta,\alpha)}{(\beta,\beta)}=\frac{\langle\alpha,v_i\rangle}{\langle\beta,v_i\rangle}.
 \]
Since these two numbers are integers, either 
\begin{enumerate}
\item[(a)] $N_\alpha=N_\beta$ and $\sqrt{2}\|\alpha\|=\|\beta\|$ or
\item[(b)] $N_\alpha=2N_\beta$ and $\|\alpha\|=\sqrt{2}\|\beta\|$
\end{enumerate}
must hold.  

Let $\alpha'$ and $\beta'$ be another pair of roots in $\Phi$ which are of type 1 and type 2 respectively and $(\alpha',\beta')\not=0$.  Since $\Phi$ is irreducible, there is a chain of roots $\beta_1(=\beta),\dots,\beta_m(=\beta')$ in $\Phi$ such that $(\beta_k,\beta_{k+1})\not=0$ for $k=1,2,\dots,m-1$.  By (1), we may assume that each $\beta_k$ is of type 2.  Then $\|\beta_1\|=\dots=\|\beta_m\|$ by (2).  Therefore, $\|\alpha\|=\|\alpha'\|$ because otherwise $\Phi$ contains roots with three different length which does not occur for an irreducible root system.  This shows that only one of (a) and (b) above occurs for $\Phi$.  

We shall prove that (b) does not occur when $\rank\Phi\ge 3$.  
Suppose that it occurs.  Then $\Phi$ must be of type C because an element of type 1 is a long root while an element of type 2 is a short root and long roots are not joined by (1).   Let $K$ be a simple Lie subgroup of $G$ corresponding to $\Phi$ and consider the $K$-orbit $Kp$ of a $T$-fixed point $p$.  The elements in $\Phi$ are homomorphisms from $T$ to $S^1$ and let $L$ be the identity component of the kernel of all those homomorphisms. Then $L$ is a torus of rank $n-\rank\Phi$.  Since $L$ commutes with $K$ by construction and $p$ is a $T$-fixed point (in particular, a $L$-fixed point), the orbit $Kp$ is fixed pointwise under the $L$-action.  Therefore, the tangent space $\tau_p(Kp)$ of $Kp$ at $p$ is fixed under the $L$-action, i.e. 
\begin{equation} \label{eq:4-6}
\tau_p(Kp)\subset (\tau_pM)^L 
\end{equation}
and moreover  
\begin{equation} \label{eq:4-7}
\dim (\tau_pM)^L\le 2\rank\Phi
\end{equation}
because the tangential $T$-representation $\tau_pM$ of $M$ at $p$ is faithful, $\dim \tau_pM=2\dim T$ and $\dim L=n-\rank\Phi$.  Now we consider two cases. 

Case 1. The case where $Kp=\{p\}$.  In this case, the $K$-representation $\tau_p(Kp)$ is faithful (because the $K$-action on $M$ is effective) so that $K$ must be embedded into a special orthogonal group $\SO(2\ell)$ by \eqref{eq:4-6} and \eqref{eq:4-7} where $\ell=\rank\Phi$.  However, this is impossible because $K$ is of type C and of $\rank\ell$.  

Case 2. The case where $Kp\not=\{p\}$.  
Let $L'$ be a maximal torus of $K$.  Then $\tau_p(Kp)^{L'}=0$.  Indeed, since $L'$ and $L$ generate the torus $T$, if $\tau_p(Kp)^{L'}\not=0$, then this together with \eqref{eq:4-6} implies that $(\tau_pM)^T\not=0$ but this contradicts the effectiveness of the $T$-action on $M$.  The fact $\tau_p(Kp)^{L'}=0$ implies that $\dim Kp\ge 2\dim L'=2\ell$ while $\dim Kp\le 2\ell$ by \eqref{eq:4-6} and \eqref{eq:4-7}.  This shows that $Kp$ is a torus manifold.  Since $Kp$ is a homogeneous space, the classification result in \cite{kuro10} for homogeneous torus manifolds shows that $K$ is not of type C.  Thus, case (b) does not occur when $\rank\Phi\ge 3$.   
\end{proof}

Note that $\RG$ lies in $H^2(BT)$ and the irreducible factors of $\RG$ are mutually perpendicular, i.e. if $\Phi$ and $\Phi'$ are distinct irreducible factors of $\RG$, then $(\gamma, \gamma')=0$ for any $\gamma\in\Phi$ and $\gamma'\in\Phi'$.  By Lemma~\ref{lemm:4-3}, if $\Phi$ is an irreducible factor of $\RG$, then $N_\gamma$ is independent of $\gamma\in\Phi$ unless $\Phi$ is of rank two and (b) in the proof of Lemma~\ref{lemm:4-3} occurs for $\Phi$.  When $\Phi$ is of rank two and (b) in the proof of Lemma~\ref{lemm:4-3} occurs for $\Phi$, the dual of $\Phi$, that is 
$\Phi^\vee:=\{ \alpha^\vee:=2\alpha/(\alpha,\alpha)\mid \alpha\in \Phi\}$, is isomorphic to $\Phi$ and $\Phi^\vee$ satisfies (a) in the proof of Lemma~\ref{lemm:4-3}, so $N_\gamma$ is independent of $\gamma\in \Phi^\vee$.  Therefore, in any case, we may assume that $N_\gamma$ is independent of $\gamma\in \Phi$ for each irreducible factor $\Phi$ of $\RG$.  Moreover, since we are concerned with the isomorphism type of $\RG$ as a root system, we may assume that $N_\gamma=1$ for any $\gamma\in \RG$ in the sequel.  Thus, the following theorem follows from Lemma~\ref{lemm:4-2} and the definition of $R(M)$.  

\begin{theo} \label{theo:4-1}
If a connected compact Lie group $G$ acts effectively on a torus manifold $M$ extending the $T$-action, then the root system $\RG$ of $G$ is a subsystem of the root system $R(M)$ of $M$. 
\end{theo}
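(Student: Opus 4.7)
The plan is to combine the preceding lemmas of Section~\ref{sect:4} with the normalization discussion immediately preceding the theorem. First I would invoke Lemma~\ref{lemm:4-1} to note that every root $\alpha \in \RG$ is of type 1 or type 2 in the sense that $|\langle \alpha, v_i\rangle| = N_\alpha$ for exactly one $i$ (type 1) or for exactly two indices $i, j$ (type 2), with $\langle \alpha, v_k\rangle = 0$ for all other $k$. This already matches the shape of Definition~\ref{defi:2-1} for an element of $R(M) = R(\{v_i\})$, except that the nonzero pairings have absolute value $N_\alpha$, possibly larger than one.

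The second step is to normalize $N_\gamma = 1$ on each irreducible factor. By Lemma~\ref{lemm:4-3}, within an irreducible factor $\Phi \subset \RG$ the value $N_\gamma$ is constant in $\gamma$, because any two roots of $\Phi$ can be linked by a chain of consecutive non-orthogonal roots (irreducibility), and each step of such a chain preserves $N_\gamma$ by the three cases of Lemma~\ref{lemm:4-3}. The one exception is the rank-two configuration (b) in the proof of Lemma~\ref{lemm:4-3}(3); there one passes to the dual factor $\Phi^\vee$, abstractly isomorphic to $\Phi$, which now satisfies (a) and has constant $N_\gamma$. Since the theorem concerns the isomorphism type of $\RG$ as a root system, rescaling each irreducible factor by its common $N_\gamma$ is a legitimate operation, and under it every $\alpha \in \RG$ literally satisfies the conditions of Definition~\ref{defi:2-1} and therefore lies in $R(M)$ as a set.

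To promote set inclusion to subsystem inclusion, I would compare Lemma~\ref{lemm:4-2}, which expresses the Weyl reflection $\rho_\alpha^*$ attached to $\alpha \in \RG$ as $\beta \mapsto \beta - \frac{2(\beta,\alpha)}{(\alpha,\alpha)}\alpha$, with the reflection formula \eqref{eq:2-3} defining $r_\alpha$ on $R(M)$. They coincide, so $\RG$ is closed under the reflections $r_\alpha$ coming from $R(M)$; combined with the inclusion of underlying sets this is precisely the subsystem property.

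The main obstacle is the normalization step: one must carefully propagate the length constancy of Lemma~\ref{lemm:4-3} across the entirety of each irreducible factor and treat the rank-two dual exception cleanly, since this is the only place the argument is not a direct citation of an already-proved lemma. Everything else reduces to assembling the machinery of Sections~\ref{sect:3} and~\ref{sect:4}.
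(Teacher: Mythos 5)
Your proposal is correct and follows essentially the same route as the paper: Lemma~\ref{lemm:4-1} for the type 1/type 2 shape, Lemma~\ref{lemm:4-3} (with the rank-two dual exception) to normalize $N_\gamma=1$ on each irreducible factor, and Lemma~\ref{lemm:4-2} to match the Weyl reflections with the reflections \eqref{eq:2-3} defining $R(M)$. This is exactly the assembly the paper carries out in the paragraph preceding Theorem~\ref{theo:4-1}.
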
 

An effective action of a compact Lie group $G$ on $M$ can be viewed as a compact Lie subgroup of $\Diff(M)$ and vice versa.  Therefore, the following follows from Theorem~\ref{theo:4-1}. 

\begin{coro} \label{coro:4-1}
Let $M$ be a torus manifold and let $G$ be a compact connected Lie subgroup of $\Diff(M)$ containing the torus $T$.  If $\RG=R(M)$, then $G$ is maximal among connected compact Lie subgroups of $\Diff(M)$ containing $T$.  
\end{coro}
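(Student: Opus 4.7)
The plan is to argue by contradiction. Suppose $G$ is not maximal, so that there exists a connected compact Lie subgroup $G'$ of $\Diff(M)$ with $T \subset G \subsetneq G'$. First I would invoke the standard observation (noted in the introduction of this paper) that whenever the $T$-action on a torus manifold extends to an effective action of a connected compact Lie group, $T$ is automatically a maximal torus of that group. Hence $T$ is a maximal torus of both $G$ and $G'$, and the root systems $\Delta(G)$, $\Delta(G')$ are both defined as subsets of $H^2(BT)$ relative to the same $T$.

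Next I would chain two inclusions to pin down the root systems. Since $G \subset G'$, the inclusion of Lie algebras gives $\Delta(G) \subset \Delta(G')$, while Theorem~\ref{theo:4-1} applied to the extended $G'$-action on $M$ yields $\Delta(G') \subset R(M)$. Combined with the hypothesis $\Delta(G) = R(M)$, these collapse to $\Delta(G) = \Delta(G') = R(M)$.

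Finally I would conclude via the root space decomposition. Writing $\mathfrak{g}, \mathfrak{g}'$ for the Lie algebras of $G, G'$ and complexifying, the adjoint action of $T$ yields
\[
\mathfrak{g}' \otimes \mathbb{C} = (\mathfrak{t} \otimes \mathbb{C}) \oplus \bigoplus_{\alpha \in \Delta(G')} (\mathfrak{g}')_\alpha
\]
with one-dimensional weight spaces. The subalgebra $\mathfrak{g} \otimes \mathbb{C}$ is $T$-invariant and contains $\mathfrak{t} \otimes \mathbb{C}$, hence is itself a sum of $T$-weight spaces; since its set of nonzero weights is $\Delta(G) = \Delta(G')$ and each weight space is one-dimensional, I conclude $\mathfrak{g} = \mathfrak{g}'$. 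As both $G$ and $G'$ are connected, this forces $G = G'$, contradicting $G \subsetneq G'$. The main obstacle here is essentially bookkeeping: the substantive geometric input is Theorem~\ref{theo:4-1}, after which the argument is standard structure theory of connected compact Lie groups. The one point deserving care is the identification of $T$ as a maximal torus in $G'$, but this follows from the usual dimension and effectiveness argument already used for $G$.
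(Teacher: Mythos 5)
Your proposal is correct and follows essentially the same route as the paper: the paper simply asserts that the corollary follows from Theorem~\ref{theo:4-1}, and your argument spells out the implicit standard reasoning (apply Theorem~\ref{theo:4-1} to the larger group $G'$, deduce $\Delta(G)=\Delta(G')$, and conclude $\mathfrak g=\mathfrak g'$ from the root space decomposition with one-dimensional root spaces). The only detail worth flagging is that $T$ is a maximal torus of $G'$, which you correctly note follows from the effectiveness/dimension argument mentioned in the introduction.
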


\begin{rema}
As pointed out by Wiemeler, the equality in Corollary~\ref{coro:4-1} cannot be attained in general.  For example, take an $n$-dimensional homology sphere $\Sigma$ with non-trivial fundamental group and consider $\Sigma\times T$, where the $T$-action on $\Sigma$ is trivial while that on the second factor $T$ is the group multiplication.  Take also the standard sphere $S^{2n}$ with the standard $T$-action.  We choose a free $T$-orbit from $\Sigma\times T$ and $S^{2n}$ respectively, remove their $T$-invariant open tubular neighborhoods and glue them  along their boundaries.  The resulting manifold $M$ is a homology sphere and a torus manifold.  The $v_i$'s for $M$ are the same as those for $S^{2n}$ because $v_i$'s are defined as parametrization of circle subgroups which fix the characteristic submanifolds and the characteristic submanifolds of $M$ and $S^{2n}$ are same.  Therefore $R(M)=R(S^{2n})$ and if there is a connected compact Lie subgroup $G$ of $\Diff(M)$ with $\RG=R(M)(=R(S^{2n}))$, then $G$ is a simple Lie group of type A and the $G$-action on $M$ must be transitive.  Therefore, $M$ must be diffeomorphic to $S^{2n}$ which is a contradiction because $M$ is non-simply connected.  Therefore, there is no compact Lie subgroup $G$ of $\Diff(M)$ with $\RG=R(M)$.  
\end{rema}

Here are examples corresponding to Example~\ref{exam:3-1}.

\begin{exam} \label{exam:4-1}
(1) Take $S^{2n}$ with the $T$-action defined in Example~\ref{exam:3-1}.  The linear action of $\SO(2n+1)$ on $S^{2n}$ is effective, $\SO(2n+1)$ contains the torus $T$ and $\Delta(\SO(2n+1))=R(S^{2n})$.  Therefore, $\SO(2n+1)$ is a maximal connected compact Lie subgroup of $\Diff(S^{2n})$ containing $T$ by Corollary~\ref{coro:4-1}.

(2) Take $\C P^{n}$ with the $T$-action defined in Example~\ref{exam:3-1}.  The linear action of $\U(n+1)$ on $\C^{n+1}$ induces an effective action of  $\PU(n+1)$ containing the torus $T$ and $\Delta(\PU(2n+1))=R(\C P^{n})$.  Therefore, $\PU(n+1)$ is a maximal connected compact Lie subgroup of $\Diff(\C P^{n})$ containing $T$ by Corollary~\ref{coro:4-1}.
\end{exam}

\section{Stable complex transformations} \label{sect:5}

A $T$-invariant stable complex structure $J$ on a torus manifold $M$ is a $T$-invariant complex structure on $\tau M\oplus\R^k$ for some $k\ge0$, where $\tau M$ denotes the tangent bundle of $M$ and $\R^k$ denotes the product bundle of rank $k$ over $M$.  
Since $J$ is $T$-invariant and a characteristic submanifold $M_i$ is fixed pointwise under the circle subgroup $v_i(S^1)$, where $v_i\in \Hom(S^1,T)=H_2(BT)$ as before, the quotient bundle of $\tau M\oplus\R^k$ restricted to $M_i$ by the tangent bundle of $M_i$ admits a complex structure induced from $J$.  We give a canonical orientation on $\R^k$.  Therefore, once we fix an orientation on $M$, $J$ determines an orientation on each characteristic submanifold $M_i$.  We note that if we change an orientation on $M$, then the induced orientation on $M_i$  also changes.  Therefore, the elements $v_i$ are independent of the choice of an orientation on $M$ and they are defined without ambiguity of sign.  

We denote by $(M,J)$ the torus manifold $M$ with the $T$-invariant stable complex structure $J$ and by $\Diff(M,J)$ the group of diffeomorphisms of $M$ which preserve $J$. The following fact, which we learned from Nigel Ray and thank him, shows that $\Diff(M,J)$ is much smaller than $\Diff(M)$. 

\begin{prop} \label{prop:5-1}
$\Diff(M,J)$ is a Lie group.
\end{prop}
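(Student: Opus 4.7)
The plan is to embed $\Diff(M,J)$ as a closed subgroup of the automorphism group of an honest almost complex manifold and then invoke the classical theorem of Boothby--Kobayashi--Wang that the automorphism group of an almost complex manifold, with the compact-open topology, is a finite-dimensional Lie group.

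The auxiliary almost complex manifold I would use is $E := M\times \R^k$ with projection $\pi\colon E\to M$. Since $\tau E$ is canonically isomorphic to $\pi^*(\tau M\oplus \R^k)$, the stable complex structure $J$ pulls back to an almost complex structure $\tilde J := \pi^*J$ on $E$. I would then define
\[
\iota\colon \Diff(M,J)\longrightarrow \Diff(E,\tilde J),\qquad \iota(f)(p,v):=(f(p),v),
\]
and verify that its differential at $(p,v)$ equals $df_p\oplus\mathrm{id}_{\R^k}$, so that the compatibility of $\iota(f)$ with $\tilde J$ is literally the same condition as the compatibility of $f$ with $J$. This makes $\iota$ an injective group homomorphism.

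Next I would identify the image of $\iota$ inside $\Diff(E,\tilde J)$ as the subgroup cut out by two closed conditions: (i) commuting with every translation $(p,v)\mapsto(p,v+w)$ for $w\in\R^k$, and (ii) preserving the zero section $M\times\{0\}$. Any $F\in\Diff(E,\tilde J)$ satisfying (i) and (ii) is forced to have the form $F(p,v)=(f(p),v)$ for a unique diffeomorphism $f$ of $M$, and preservation of $\tilde J$ then forces $f\in\Diff(M,J)$; so the image is exactly this subgroup. Both (i) and (ii) are closed in the compact-open topology, so the image is closed. Combining this with the Boothby--Kobayashi--Wang theorem realises $\Diff(M,J)$ as a closed subgroup of a finite-dimensional Lie group, and thereby endows it with a Lie group structure.

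The main delicacy is the non-compactness of $E$: one has to be sure that Boothby--Kobayashi--Wang applies to non-compact almost complex manifolds. This is indeed the case -- their argument, based on the prolongation of the $\mathrm{GL}(n,\C)$-structure together with Bochner--Montgomery style local rigidity, does not use compactness -- but it is the one ingredient of the plan that is not formal, and as such is the step I would check most carefully.
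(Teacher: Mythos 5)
Your overall strategy---stabilize $J$ to an honest almost complex structure on an auxiliary manifold, embed $\Diff(M,J)$ as a closed subgroup of its automorphism group, and invoke Boothby--Kobayashi--Wang---is exactly the paper's strategy. But the one step you flagged as needing care is precisely where the argument breaks: the Boothby--Kobayashi--Wang theorem (Corollary 4.2 on p.~19 of Kobayashi's \emph{Transformation Groups in Differential Geometry}, the result the paper cites) requires the almost complex manifold to be \emph{compact}, and your auxiliary manifold $E=M\times\R^k$ is not. Compactness is not a removable hypothesis here: an almost complex structure is a $\GL(n,\C)$-structure of \emph{infinite} type (its prolongations do not terminate), so the finite-type argument that works without compactness does not apply; one only has ellipticity, and the passage from ellipticity to a Lie group structure on the automorphism group genuinely uses compactness. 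A concrete counterexample to your claim that ``their argument does not use compactness'' is $\C^n$ with its standard complex structure for $n\ge 2$: the shears $(z,w)\mapsto(z,w+p(z))$, $p$ an arbitrary polynomial, form an infinite-dimensional subgroup of the automorphism group, which is therefore not locally compact and not a Lie group. So you cannot conclude that $\Diff(E,\tilde J)$ is a Lie group, and your closed-subgroup argument has nothing to sit inside.

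The fix is small and is what the paper does: replace $\R^k$ by the compact torus $(S^1)^k$. Since $(S^1)^k$ is parallelizable, $\tau\bigl(M\times(S^1)^k\bigr)\cong \pi^*(\tau M\oplus\R^k)$, so $J$ still induces an almost complex structure $\bar J$ on the \emph{compact} manifold $\bar M:=M\times(S^1)^k$, and Kobayashi's Corollary 4.2 now applies to give that $\Diff(\bar M,\bar J)$ is a Lie group. Your embedding $f\mapsto f\times\mathrm{id}$ and the identification of its image as a closed subgroup (cut out by commuting with the $(S^1)^k$-translations and respecting the projection to the second factor) go through verbatim in this setting, and the closed-subgroup theorem finishes the proof. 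With that single substitution your write-up coincides with the paper's argument.
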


\begin{proof}
By definition $J$ is a complex structure on $\tau M\oplus \R^k$ for some $k$.  Since the tangent bundle of  $\bar M:=M\times (S^1)^k$ is isomorphic to $\tau M\oplus\R^k$,  $J$ defines an almost complex structure $\bar J$ on $\bar M$,  so that $\Diff(\bar M,\bar J)$ is a Lie group (see \cite[Corollary 4.2 in p.19]{koba72}). Any diffeomorphism of $M$ extends to a diffeomorphism of $\bar M$ by defining the identity on the added factor $(S^1)^k$ so that $\Diff(M,J)$ can be viewed as a closed subgroup of $\Diff(\bar M,\bar J)$.  Since any closed subgroup of a Lie group is a Lie group as is well-known, the theorem follows.   
\end{proof}

Remember that the elements $v_i$'s $(1\le i\le m)$ in $H_2(BT)$ for $(M,J)$ are defined without ambiguity of sign.  Moreover, (1) in Lemma~\ref{lemm:4-1} does not occur in our stable complex case by the last statement in the lemma.  Motivated by this observation, we make the following definition.    

\begin{defi}
For $(M,J)$, we define 
\[
\begin{split}
R(M,J):=\{ \alpha\in H^2(BT)\mid &\ \langle \alpha,v_i\rangle=1, 
\langle \alpha,v_j\rangle=-1\  \text{for some $i\not=j$}\\
&\ \text{ and}\ \langle\alpha,v_k\rangle=0
\  \text{for any $k\not=i,j$}\}.
\end{split}
\]
\end{defi}

Note that $R(M,J)$ is a subsystem of $R(M)$ and there is no conjugate pair in $R(M,J)$.

\begin{theo} \label{theo:5-1}
Let  $M$ be a torus manifold with a $T$-invariant stable complex structure $J$.  If a connected compact Lie group $G$ acts effectively on $M$ extending the $T$-action and preserving the stable complex structure $J$, then the root system $\RG$ of $G$ is a subsystem of $R(M,J)$ and every irreducible factor of $\RG$ is of type A. 
\end{theo}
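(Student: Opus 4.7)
The plan is to combine Theorem~\ref{theo:4-1} with the last statement of Lemma~\ref{lemm:4-1} to refine the inclusion $\Delta(G)\subset R(M)$ to $\Delta(G)\subset R(M,J)$, and then read off the type of the irreducible factors from Theorem~\ref{theo:2-1}.

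Fix $\alpha\in \Delta(G)$ and choose $g\in N_{G_\alpha}(T)\setminus T$. First I would check that this $g$ preserves the omniorientation on $M$ determined by $J$. Since $J$ is a complex structure on $\tau M\oplus\R^k$, once an orientation on the trivial summand is fixed, $J$ determines an orientation on $M$ and, via the induced complex structure on each normal bundle $\nu_i$, an orientation on every characteristic submanifold $M_i$, as recalled at the start of Section~\ref{sect:5}. Because $g\in N_G(T)$, Lemma~\ref{lemm:3-3} supplies a permutation $\sigma$ on $[m]$ with $g(M_i)=M_{\sigma(i)}$; because $g$ preserves $J$, the bundle map $dg\oplus \mathrm{id}$ is complex-linear, so $dg$ sends $\nu_i$ complex-linearly, hence orientation-preservingly, to $\nu_{\sigma(i)}$ and preserves the orientation of $M$ itself. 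This is exactly the condition that $g$ preserves the omniorientation.

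With this in hand the last statement of Lemma~\ref{lemm:4-1} applies to $g$: case~(1) of that lemma is ruled out and $\epsilon_\alpha=1$. Hence $\alpha$ is of type~$2$ with $\langle\alpha,v_i\rangle=-\langle\alpha,v_j\rangle$ for some $i\neq j$, and after the normalization $N_\gamma=1$ already adopted just before Theorem~\ref{theo:4-1} the pairing values read
\[
\langle\alpha,v_i\rangle=1,\qquad \langle\alpha,v_j\rangle=-1,\qquad \langle\alpha,v_k\rangle=0\ \text{ for }k\neq i,j,
\]
which is precisely the defining condition for $\alpha\in R(M,J)$. This proves $\Delta(G)\subset R(M,J)$, the first assertion of the theorem.

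To finish, I would invoke Theorem~\ref{theo:2-1} to classify the irreducible factors of $\Delta(G)$, which by Theorem~\ref{theo:4-1} are irreducible subsystems of $R(M)$. A rank-one factor is automatically of type~A, while for an irreducible factor $\Phi$ of rank at least two Theorem~\ref{theo:2-1} leaves only A, B, D. Type~B is excluded because $\Delta(G)\subset R(M,J)$ contains no element of type~$1$. Type~D is excluded because any two elements of $R(M,J)$ sharing the same unordered index pair $\{i,j\}$ of nonzero coordinates must both equal $\pm(e_i^*-e_j^*)$ and are therefore linearly dependent; hence $R(M,J)$ contains no conjugate pair in the sense of Definition~\ref{defi:2-2}, and Theorem~\ref{theo:2-1}(2) forbids type~D. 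The only non-routine point of the argument is the first paragraph, where preservation of the stable complex structure $J$ has to be translated into preservation of the omniorientation on $M$, which is what makes the last clause of Lemma~\ref{lemm:4-1} available; once that is established the rest is a direct application of the machinery of Sections~\ref{sect:2} and~\ref{sect:4}.
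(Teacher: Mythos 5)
Your proposal is correct and follows essentially the same route as the paper: the paper's proof likewise combines Lemma~\ref{lemm:4-1} (whose last clause is enabled by the fact that preserving $J$ forces preservation of the omniorientation) with Theorem~\ref{theo:4-1} to get $\RG\subset R(M,J)$, and then uses the absence of type~1 elements and of conjugate pairs in $R(M,J)$ together with Theorem~\ref{theo:2-1} to conclude type~A. You have merely spelled out details (complex-linearity of $dg$ on the normal bundles, the normalization $N_\gamma=1$) that the paper leaves implicit.
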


\begin{proof}
We may assume that any root $\alpha\in \RG$ lies in $R(M,J)$ by Lemma~\ref{lemm:4-1} and $\RG$ is a subsystem of $R(M)$ by Theorem~\ref{theo:4-1}, so $\RG$ is a subsystem of $R(M,J)$ and since there is no conjugate pair in $R(M,J)$, only type A appears as an irreducible factor of $\RG$ by Theorem~\ref{theo:2-1}. 
\end{proof}

\begin{rema}
It is shown in \cite[Lemma 5.8]{wiem10} that any irreducible factor of the $G$ in Theorem~\ref{theo:5-1} is of type A.  
\end{rema}

The following corollary is a stable complex version of Corollary~\ref{coro:4-1}. 

\begin{coro} \label{coro:5-1}
Let $M$ be a torus manifold with a stable complex structure $J$ and let $G$ be a connected compact Lie subgroup of $\Diff(M,J)$ containing the torus $T$.  If $\RG=R(M,J)$, then $G$ is maximal among connected compact Lie subgroups of $\Diff(M,J)$ containing $T$.  
\end{coro}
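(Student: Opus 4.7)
The plan is to mimic the standard argument that the root system together with the maximal torus determines a connected compact Lie group, but using Theorem~\ref{theo:5-1} in the stable-complex setting to control the root system of any potential enlargement.

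First I would take a connected compact Lie subgroup $G'$ of $\Diff(M,J)$ with $T\subset G\subseteq G'$ and aim to show $G=G'$. A preliminary observation is that $T$ is a maximal torus of $G'$: since $G'$ acts effectively on the $2n$-manifold $M$, any torus in $G'$ has dimension at most $n=\dim T$, so $T$ is already maximal. Consequently the roots of $G'$ and of $G$ are both defined with respect to the same torus $T$, and we may compare them inside $H^2(BT)$. Since $G\subseteq G'$, the adjoint $T$-representation on $\mathfrak{g}\otimes\C$ is a sub-representation of that on $\mathfrak{g}'\otimes\C$, hence $\Delta(G)\subseteq\Delta(G')$.

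Next I would apply Theorem~\ref{theo:5-1} to the extended action of $G'$ on $(M,J)$: since $G'$ is connected, compact, acts effectively on $M$ extending the $T$-action, and preserves $J$, its root system satisfies $\Delta(G')\subseteq R(M,J)$. Combining this with the hypothesis $\Delta(G)=R(M,J)$ and the inclusion $\Delta(G)\subseteq \Delta(G')$ obtained above yields
\[
\Delta(G)=\Delta(G')=R(M,J).
\]

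Finally I would conclude using the structure theory of compact connected Lie groups. The complexified Lie algebras admit the root space decompositions
\[
\mathfrak{g}\otimes\C=(\mathfrak{t}\otimes\C)\oplus\bigoplus_{\alpha\in\Delta(G)}\mathfrak{g}_\alpha,\qquad \mathfrak{g}'\otimes\C=(\mathfrak{t}\otimes\C)\oplus\bigoplus_{\alpha\in\Delta(G')}\mathfrak{g}'_\alpha,
\]
where each root space is one-dimensional over $\C$. Since $\mathfrak{g}_\alpha\subseteq\mathfrak{g}'_\alpha$ and both are one-dimensional, they coincide for every $\alpha$, hence $\mathfrak{g}=\mathfrak{g}'$. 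Connectedness of $G$ and $G'$ then gives $G=G'$, proving maximality. The only real work is in the first paragraph (checking that $T$ is maximal in $G'$, and justifying the comparison of roots), and then everything reduces to citing Theorem~\ref{theo:5-1} and standard Lie-theoretic facts; there is no genuine geometric obstacle here beyond what is already packaged in Theorem~\ref{theo:5-1}.
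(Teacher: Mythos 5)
Your proof is correct and follows the same route the paper intends: Corollary~\ref{coro:5-1} is stated as an immediate consequence of Theorem~\ref{theo:5-1}, via exactly the sandwich $\Delta(G)\subseteq\Delta(G')\subseteq R(M,J)=\Delta(G)$ together with the standard fact that a connected compact Lie group is determined by its maximal torus and root spaces. The one step to tighten is your claim that effectiveness alone bounds every torus in $G'$ by dimension $n$ (it does not in general); the correct argument is that any torus $T'\supseteq T$, being connected, fixes each point of the finite nonempty set $M^T$, and the faithful isotropy representation at such a point forces $\dim T'\le n$ --- this is the standard fact the paper itself invokes when asserting that $T$ is a maximal torus of any extended group.
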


Here are two examples of $R(M,J)$ corresponding to Example~\ref{exam:4-1} in Section~\ref{sect:4}.   

\begin{exam} \label{exam:5-1}
(1) Take $M=S^{2n} (\subset \C^n\oplus\R)$ with the standard $T$-action defined in Example~\ref{exam:3-1}.  Since $\tau S^{2n}\oplus\R\cong \C^n\oplus\R$, we have 
\[
\tau S^{2n}\oplus\R\oplus\R\cong \C^n\oplus\R\oplus\R\cong \C^{n+1}.
\] 
Let $J$ be a stable complex structure defined from this identification.  Then one see that $\{v_i\}_{i=1}^n$ is the standard basis $\{e_i\}_{i=1}^n$ through an identification of $H_2(BT)=\Hom(S^1,T)$ with $\Z^n$, so that 
\[
R(S^{2n},J)=\{\pm(e_i^*-e_j^*)\ \ (1\le i<j\le n)\}.
\]  
This is a root system of type $A_{n-1}$.  Note that $R(S^{2n},J)$ is a proper subsystem of $R(S^{2n})$.  The linear action of $\U(n)$ on $\C^n$ induces an action on $S^{2n}$ preserving the $J$ and $\U(n)$ contains the torus $T$ and $\Delta(\U(n))=R(S^{2n},J)$.  Therefore, $\U(n)$ is a maximal connected compact Lie subgroup of $\Diff(S^{2n},J)$ containing $T$ by Corollary~\ref{coro:5-1}.  

(2) Take $M=\C P^n$ with the $T$-action defined in Example~\ref{exam:3-1} and with the standard complex structure $\Jst$.  Then one sees that 
$\{v_i\}_{i=1}^{n+1}$ as $v_i=e_i$ for $1\le i\le n$ and $v_{n+1}=-\sum_{i=1}^ne_i$ through an identification of $H_2(BT)=\Hom(S^1,T)$ with $\Z^n$. Hence 
\[
R(\C P^n,\Jst)=R(\C P^n)=\{\pm e_i^*\ (1\le i\le n),\ \pm(e_i^*-e_j^*) \ (1\le i<j\le n)\}.
\]
Since the action of $\PU(n+1)$ mentioned in Example~\ref{exam:4-1} preserves $\Jst$, $\PU(n+1)$ is also a maximal connected compact Lie subgroup of $\Diff(\C P^n,\Jst)$. 
\end{exam}

\section{Toric manifolds} \label{sect:6}

The complex projective space $\C P^n$ is a typical example of a complete non-singular toric variety (which we call a \emph{toric manifold}).  It turns out that $R(X,\Jst)=R(X)$ for any toric manifold $X$ with the standard complex structure $\Jst$ and that there is a connected compact Lie group $G$ preserving the $\Jst$ such that $\RG=R(X,\Jst)$, which we will discuss in this section (it is also discussed in \cite{masu10} from the viewpoint of symplectic category).  We also discuss $R(X,J)$ for an invariant stable complex structure $J$ different from the standard complex structure $\Jst$.  See \cite{fult93} or \cite{oda88} for the basics of toric varieties. 

\subsection{Fan and toric manifold} 
Let $\Delta$ be a complete non-singular fan of dimension $n$ in $H_2(BT)\otimes\R=H_2(BT;\R)$ and let $\Sigma$ be the underlying simplicial complex of $\Delta$.  Let $m$ be the number of vertices in $\Sigma$ and let $\{v_i\}_{i=1}^m$ be the primitive vectors on the one-dimensional cones in $\Delta$.  Since $\Delta$ can be recovered from $\Sigma$ and $\{v_i\}_{i=1}^m$, we may think of $\Delta$ as a pair $(\Sigma,\{v_i\}_{i=1}^m)$. 

Let $X(\Delta)$ be the toric manifold associated with the complete nonsingular fan $\Delta=(\Sigma,\{v_i\}_{i=1}^m)$.  
We shall recall the quotient construction of $X(\Delta)$.  
For $\sigma\subset [m]$ we set 
\[
L_\sigma:=\{(z_1,\dots,z_m)\in\C^m\mid z_i=0\ \text{for $i\in\sigma$}\}\quad\text{and}\quad Z:=\bigcup_{\sigma\notin\Sigma}L_\sigma.
\]
We define a homomorphism $\mathcal V\colon (\C^*)^m\to (\C^*)^n$ by 
\[
\mathcal V(g_1,\dots,g_m):=\prod_{i=1}^m\lambda_{v_i}(g_i)
\]
where $\lambda_{v_i}$ is a homomorphism from $\C^*$ to $(\C^*)^n$ determined by $v_i$.  
Then 
\[
X(\Delta)=(\C^m\backslash Z)/\Ker\mathcal V.
\]
The standard action of $(\C^*)^m$ on $\C^m$ induces an action of $(\C^*)^m/\Ker\mathcal V$ on $X(\Delta)$.  The homomorphism $\mathcal V$ is surjective and identifies $(\C^*)^m/\Ker\mathcal V$ with $(\C^*)^n$ so that $(\C^*)^n$ acts on $X(\Delta)$.  
The toric manifold $X(\Delta)$ with the restricted action of $T=(S^1)^n$ is a torus manifold.   

The projection $(\C^*)^m\to \C^*$ on the $i$-th factor induces a homomorphism $\rho_i\colon \Ker\mathcal V\to \C^*$.  Let $L_i$ be the complex line bundle over $X(\Delta)$ associated to $\rho_i$, namely the total space of $L_i$ is the orbit space of $(\C^m\backslash Z)\times\C$ by the diagonal action of $\Ker\mathcal V$ (through $\rho_i$ on $\C$) and the projection map on $X(\Delta)$ is the induced one from the projection on the first factor $\C^m\backslash Z$.  Then it is known and not difficult to see that there is a canonical isomorphism 
\begin{equation} \label{eq:6-1}
\tau X(\Delta)\oplus \underline{\C}^{m-n}\cong \bigoplus_{i=1}^mL_i \quad\text{as complex $T$-vector bundles},
\end{equation}
where $\underline{\C}^{m-n}$ denotes the product vector bundle $X(\Delta)\times\C^{m-n}$ (see \cite[Theorem 6.5]{is-fu-ma13} for example).    

\subsection{$R(X(\Delta),\Jst)$ for the standard complex structure $\Jst$} 
Let $\Jst$ be the standard complex structure on $X(\Delta)$.  For $\alpha\in R(X(\Delta),\Jst)$, there are $i,j\in [m]$ such that 
\[
\langle \alpha,v_i\rangle=1,\quad \langle \alpha,v_j\rangle=-1\quad 
\text{and}\quad \langle\alpha,v_k\rangle=0 \text{ for $k\not=i,j$.}
\]
Let $\chi^\alpha$ be the character of $(\C^*)^n$ corresponding to $\alpha$.  Since 
\[
\chi^\alpha(\prod_{i=1}^m\lambda_{v_i}(g_i))=
\prod_{i=1}^mg_i^{\langle\alpha,v_i\rangle}=g_ig_j^{-1},
\]
$\Ker (\chi^{\alpha}\circ\mathcal V)$ is a subgroup of $(\C^*)^m$ with $g_i=g_j$, which we denote by $H_\alpha$.  Obviously $H_\alpha$ contains $\Ker \mathcal V$. Let $\GL_\alpha$ be the (maximal) connected subgroup of $\GL(m,\C)$ which commutes with $H_\alpha$.  It is generated by $(\C^*)^m$ and linear automorphisms of $\C^m$ which fixes coordinates $z_k$'s for $k\not=i,j$.   

\begin{lemm} \label{lemm:6-1}
The action of $\GL_\alpha$ on $\C^m$ leaves $Z$ invariant. 
\end{lemm}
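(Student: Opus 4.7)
The plan is to exploit the product decomposition $\GL_\alpha \cong \GL(2,\C) \times (\C^*)^{m-2}$, with $\GL(2,\C)$ acting on the $(z_i, z_j)$-plane and the torus factor acting diagonally on the remaining coordinates. The torus factor manifestly preserves $Z$, so it suffices to show that the $\GL(2,\C)$ action preserves $Z$, or equivalently its open complement $U := \C^m \setminus Z = \{z : \{k : z_k = 0\} \in \Sigma\}$. Concretely, for $z \in U$ with $\tau := \{k : z_k = 0\} \in \Sigma$ and for $g \in \GL_\alpha$ with $\GL(2,\C)$-factor $A$, I will show that $\tau' := \{k : (g\cdot z)_k = 0\}$ is again in $\Sigma$.

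The critical input is the type 2 condition on $\alpha$: the rays $v_k$ for $k \neq i, j$ all lie in the hyperplane $\alpha^\perp \subset N\otimes\R$, while $v_i$ and $v_j$ lie on opposite sides. Since $\Delta$ is a complete non-singular fan of dimension $n$, the $n$ rays of any maximal cone span $N\otimes\R$ and so cannot all lie in a hyperplane; thus every maximal cone of $\Delta$ contains $v_i$ or $v_j$. This Demazure-type observation drives the rest of the argument.

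From it I plan to extract two combinatorial properties of $\Sigma$. First, a \emph{swap property}: if $\tau\in\Sigma$ with $i\in\tau$ and $j\notin\tau$, then $(\tau\setminus\{i\})\cup\{j\}\in\Sigma$. For maximal $\tau$, the wall $\tau\setminus\{i\}$ is shared with a unique other maximal cone in the complete fan; this neighbour does not contain $v_i$ and so, by the above, must contain $v_j$. Second, an \emph{extension property}: if $\tau\in\Sigma$ with $i,j\notin\tau$, then both $\tau\cup\{i\}$ and $\tau\cup\{j\}$ lie in $\Sigma$; this follows by extending $\tau$ to a maximal cone $\sigma$, which must contain $v_i$ or $v_j$, and applying the swap property if $\sigma$ contains only the ``wrong'' one of the two. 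Both properties pass from maximal cones down to their subfaces by subset closure of $\Sigma$.

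The proof then finishes by case analysis on $\tau \cap \{i, j\}$ and on the columns of $A$: when $\{i,j\}\subset\tau$ one has $\tau' = \tau$; when $|\tau \cap \{i,j\}| = 1$ the set $\tau'$ is one of $\tau$, $\tau\setminus\{i\}$, or $(\tau\setminus\{i\})\cup\{j\}$ (and symmetrically in $j$); and when $\tau \cap \{i,j\} = \emptyset$ the set $\tau'$ is one of $\tau$, $\tau\cup\{i\}$, or $\tau\cup\{j\}$. Each of these lies in $\Sigma$ by closure under subsets together with the swap and extension properties. I expect the main obstacle to be the verification of the extension property, since it genuinely combines completeness of $\Delta$, the hyperplane observation, and the swap property; the subsequent case analysis is purely mechanical.
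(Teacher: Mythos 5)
Your proof is correct and rests on the same geometric core as the paper's: all rays other than $v_i,v_j$ lie in the hyperplane $\Ker\alpha$, so by completeness the maximal cone adjacent to a given one across a wall contained in $\Ker\alpha$ must trade $v_i$ for $v_j$ --- this is exactly your swap property and is the heart of the paper's argument as well. The paper packages the combinatorics as the single (equivalent) statement that every minimal non-face of $\Sigma$ contains both of $i,j$ or neither, proved by contradiction via an uncovered point near a wall, whereupon the invariance of each $L_\sigma$ under $\GL_\alpha$ is immediate; your explicit swap/extension lemmas and case analysis on the zero-set of $g\cdot z$ are a correct, somewhat more laborious, rendering of the same idea.
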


\begin{proof}
If $\sigma\subset\sigma'$, then $L_\sigma\supset L_{\sigma'}$; so $Z$ is the union of $L_{\sigma}$'s for minimal $\sigma\notin \Sigma$.  Therefore it suffices to prove that if $\sigma$ is minimal among subsets of $[m]$ not contained in $\Sigma$, then $\sigma$ contains both $i$ and $j$ or contains neither $i$ nor $j$.  

Suppose that $\sigma\ni i$ but $\sigma\not\ni j$ (and we will deduce a contradiction).  Since $\sigma$ is minimal, $\sigma\backslash\{i\}\in \Sigma$. Let $\tau$ be an $(n-1)$-simplex in $\Sigma$ containing $\sigma\backslash\{i\}$.  The cone $\angle v_\tau$ spanned by $v_\ell$'s for $\ell \in \tau$ is of dimension $n$. Since $\sigma\notin \Sigma$, we have $\tau\not\ni i$.  Then $\tau\ni j$ because otherwise $\angle v_\tau$ is contained in $\Ker\alpha$ and this contradicts the fact that $\angle v_\tau$ is of dimension $n$. The cone $\angle v_{\tau\backslash\{j\}}$ is of dimension $n-1$ and contained in $\Ker\alpha$.  The cone spanned by $\angle v_{\tau\backslash\{j\}}$ and $v_i$ is not a member of $\Delta$ because otherwise $(\tau\backslash\{j\})\cup\{i\}\in \Sigma$ and since this simplex contains $\sigma$, i.e., $\sigma$ must be a member of $\Sigma$, which gives a contradiction. 

The hyperplane $\Ker\alpha$ splits $H_2(BT;\R)$ into two (open) half spaces and contains all $v_k$'s except $v_i$ and $v_j$, so we can say $v_{i}$-side and $v_{j}$-side. The fact that the cone spanned by $\angle v_{\tau\backslash\{j\}}$ and $v_i$ is not a member of $\Delta$ means that a point of $H_2(BT;\R)$ which is slightly above the cone $\angle v_{\tau\backslash\{j\}}$ (here \lq\lq above" means that it is on the $v_i$ side) cannot be contained in any cone of $\Delta$.  This contradicts the completeness of $\Delta$.  Therefore the claim is proven.   
\end{proof}

Set 
$$H:=\bigcap_{\alpha\in R(X(\Delta),\Jst)}H_\alpha$$ 
and let $\GL(\Delta)$ be the subgroup of $\GL(m,\C)$ generated by $\GL_\alpha$'s for $\alpha\in R(X(\Delta),\Jst)$. The group generated by ${\rho_\alpha}_*$'s acts on the set $\{v_i\}_{i=1}^m$ (see Section~\ref{sect:4}) and decomposes it into a union of orbits.  Looking at the induces of $v_i$'s, this orbit decomposition produces a partition of $[m]$, say $\mu_1,\dots,\mu_s$. Then 
\[
H=\{(g_1,\dots,g_m)\in(\C^*)^m\mid g_i=g_j \ \text{when $i,j\in\mu_r$ for some 
$r$}\}.
\]
For a subset $\mu$ of $[m]$ we denote by $\GL(\mu)$ the subgroup of $\GL(m,\C)$ which fixes coordinates $z_k$'s for $k\notin\mu$. Then $\GL(\Delta)=\prod_{r=1}^s\GL(\mu_r)$.  The root system of $\GL(\Delta)$ agrees with $R(X(\Delta),\Jst)$.  

By Lemma~\ref{lemm:6-1}, the action of $\GL(\Delta)$ on $\C^m$ leaves $Z$ invariant.  Since $\GL(\Delta)$ commutes with $H$ containing $\Ker\mathcal V$, the action of $\GL(\Delta)$ on $\C^m\backslash Z$ descends to an action of $G(\Delta):=\GL(\Delta)/\Ker \mathcal V$ on $X(\Delta)=(\C^m\backslash Z)/\Ker\mathcal V$.  Therefore a maximal connected compact Lie subgroup $G$ of $G(\Delta)$ acts on $X(\Delta)$ preserving the complex structure $\Jst$.  The root system $\RG$ of $G$ agrees with that of $G(\Delta)$ and also that of $\GL(\Delta)$. Therefore $\RG=R(X(\Delta),\Jst)$. 
Consequently, by Corollary \ref{coro:5-1}, we have the following proposition.
\begin{prop} \label{prop:6-1}
Let $(X(\Delta),\Jst)$ be a toric manifold with the standard complex structure. Then, the $G$ above is a maximal connected compact Lie subgroup of ${\rm Diff}(X(\Delta),\Jst)$ containing $T$.
\end{prop}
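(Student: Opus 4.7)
My plan is to derive the proposition as a direct application of Corollary~\ref{coro:5-1}. To invoke that corollary, I need to verify (i) that $G$ is a connected compact Lie subgroup of $\Diff(X(\Delta), \Jst)$ containing the torus $T$, and (ii) that $\Delta(G) = R(X(\Delta), \Jst)$.

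For (i), observe that $G(\Delta) = \GL(\Delta)/\Ker\mathcal V$ contains the image of the diagonal $(\C^*)^m \subset \GL(\Delta)$, which under $\mathcal V$ surjects onto $(\C^*)^n$; hence the compact torus $T = (S^1)^n$ lies inside $G(\Delta)$. Since $T$ is itself connected and compact, I may select the maximal connected compact subgroup $G$ of $G(\Delta)$ so that $T \subseteq G$. The $G$-action on $X(\Delta)$ preserves $\Jst$ because the entire $G(\Delta)$-action is induced from the holomorphic $\GL(\Delta)$-action on $\C^m \setminus Z$ and therefore descends to a holomorphic action on $X(\Delta)$.

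For (ii), the construction preceding the proposition identifies the root system of $\GL(\Delta) = \prod_{r=1}^s \GL(\mu_r)$ with $R(X(\Delta), \Jst)$: each factor $\GL(\mu_r)$ is of type A and contributes exactly the elements of $H^2(BT)$ whose evaluations on $\{v_i\}_{i \in \mu_r}$ realize the type-2 pattern of Definition~\ref{defi:2-1}, and the orbit partition $\mu_1, \dots, \mu_s$ of $[m]$ under the reflections ${\rho_\alpha}_*$ exhausts $R(X(\Delta), \Jst)$. Passing to the quotient $G(\Delta) = \GL(\Delta)/\Ker\mathcal V$ preserves the root system because $\Ker\mathcal V$ lies in the central diagonal torus of $\GL(\Delta)$, and passing further to the maximal connected compact subgroup $G$ preserves the root system via the standard correspondence between complex reductive groups and their compact real forms. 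Hence $\Delta(G) = R(X(\Delta), \Jst)$, and Corollary~\ref{coro:5-1} yields the maximality of $G$ among connected compact Lie subgroups of $\Diff(X(\Delta), \Jst)$ containing $T$.

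The main point requiring care, rather than a serious obstacle, is the combinatorial identification in (ii)---that the orbit decomposition of $\{v_i\}$ under the $\{{\rho_\alpha}_*\}$ produces precisely the block-A structure of $\GL(\Delta)$ and matches $R(X(\Delta), \Jst)$. This was already achieved in the construction immediately preceding the proposition, so the proof reduces to checking the hypotheses of Corollary~\ref{coro:5-1}.
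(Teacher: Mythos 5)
Your proposal is correct and follows essentially the same route as the paper: the paper's proof is precisely the construction preceding the proposition (the $\GL_\alpha$'s, Lemma~\ref{lemm:6-1}, the descent to $G(\Delta)$, and the identification $\Delta(G)=R(X(\Delta),\Jst)$) followed by an appeal to Corollary~\ref{coro:5-1}. You merely spell out a few details the paper leaves implicit, such as why $T\subseteq G$ and why the root system survives the passage to $G(\Delta)$ and to its maximal compact subgroup.
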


\begin{rema}
Demazure defines roots associated to the fan $\Delta$ in \cite{de70} (also see \cite{oda88}) and intoduces a root system which describes the reductive part of $\Aut(X(\Delta))$.  Proposition \ref{prop:6-1} follows from his result.
\end{rema}

\subsection{$R(X(\Delta),J)$ for a non-standard stable complex structure $J$} 
We may regard the standard complex structure $\Jst$ as the stable complex structure induced from the complex structure on the right side of \eqref{eq:6-1}.  Each $L_i$ has another complex structure which is complex conjugate to the original one.  We denote $L_i$ with such a complex structure by $\bar{L_i}$.  We take an integer $q$ between $1$ and $m-1$ and consider the invariant stable complex structure $J$ on $X(\Delta)$ induced from $\bigoplus_{i=1}^qL_i\oplus\bigoplus_{i=q+1}^m \bar{L_i}$ through the map \eqref{eq:6-1}. 
In terms of $\C^m$ above, we are taking the standard complex structure on the first $q$ factors $\C^q$ but the \lq\lq anti-complex structure" on the last $m-q$ factors $\C^{m-q}$.  The subset of $H_2(BT)$ associated to $J$ is $\{v_i\}_{i=1}^q\cup\{-v_i\}_{i=q+1}^m$.  

\medskip
\noindent
{\bf Claim.}  There is no $\alpha\in H^2(BT)$ such that 
\[
\begin{split}
&\langle \alpha,v_i\rangle=1\quad \text{for some $i$ such that $1\le i\le q$},\\
&\langle\alpha,-v_j\rangle=-1,\quad \text{for some $j$ such that $q+1\le j\le m$},\\ 
&\langle\alpha,v_k\rangle=0\quad \text{for $k\not=i,j$}.
\end{split}
\]

\begin{proof}
If there is such $\alpha$, then all $v_\ell$'s sit on the non-negative side of $\alpha$ but this contradicts the completeness of $\Delta$. 
\end{proof} 
 
The above claim implies that $R(X(\Delta),J)\subset R(X(\Delta),\Jst)$.  Moreover, if $\GL(\Delta)_J$ denotes the subgroup of $\GL(m,\C)$ generated by $\GL_\alpha$'s for $\alpha \in R(X(\Delta),J)$, then 
\[
\GL(\Delta)_J=\GL(\Delta)\cap\big(\GL(q,\C)\times\GL(m-q,\C)\big)
\]
so that $G(\Delta)_J:=\GL(\Delta)_J/\Ker\mathcal V$ is contained in $G(\Delta)$.  The restricted action of $G(\Delta)_J$ on $X(\Delta)$ preserves  the stable complex structure $J$.  The root system of a maximal connected compact subgroup $G_J$ of $G(\Delta)_J$ agrees with that of $G(\Delta)_J$ and that of $\GL(\Delta)_J$.  Therefore $\Delta(G_J)=R(X(\Delta),J)$. 
Consequently, by Corollary \ref{coro:5-1}, we get the following generalization of Proposition \ref{prop:6-1}.
\begin{prop} \label{prop:6-2}
Let $(X(\Delta),J)$ be a toric manifold with the stable complex structure $J$.  Then, the $G_J$ above is a maximal connected compact Lie subgroup of ${\rm Diff}(X(\Delta),J)$ containing $T$.
\end{prop}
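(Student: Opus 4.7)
The plan is to reduce Proposition~\ref{prop:6-2} to Corollary~\ref{coro:5-1}, so the work is to verify the three hypotheses of that corollary: $G_J$ is a connected compact Lie subgroup of $\Diff(X(\Delta),J)$, $G_J$ contains $T$, and $\Delta(G_J)=R(X(\Delta),J)$. The paragraph preceding the proposition has essentially assembled these ingredients, and my proof will be to make each of them explicit.

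First, I would check that the $G_J$-action preserves $J$. By construction $\GL(\Delta)_J\subset \GL(q,\C)\times\GL(m-q,\C)$, so it acts $\C$-linearly on the first $q$ factors of $\C^m$ and $\C$-linearly (rather than anti-linearly) on the last $m-q$ factors. Under the canonical isomorphism \eqref{eq:6-1}, this means the induced action on $X(\Delta)$ preserves the complex structure on each summand $L_i$ for $i\le q$ and on each summand $\bar L_i$ for $i>q$, hence preserves the stable complex structure $J$ associated with $\bigoplus_{i=1}^q L_i\oplus\bigoplus_{i=q+1}^m \bar L_i$. Lemma~\ref{lemm:6-1} applied to each $\alpha\in R(X(\Delta),J)$ (a subset of $R(X(\Delta),\Jst)$ by the Claim) ensures that $\GL(\Delta)_J$ preserves $Z$, so the action descends to $X(\Delta)$, and since $\GL(\Delta)_J$ commutes with $H\supset \Ker\mathcal V$, the descent gives a well-defined action of $G(\Delta)_J$; restricting to a maximal compact subgroup yields $G_J\subset \Diff(X(\Delta),J)$.

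Second, the compact torus $T=(S^1)^n$ is the image in $G(\Delta)$ of the diagonal $(S^1)^m\subset \GL(m,\C)$ modulo $\Ker\mathcal V$; since this diagonal lies in every $\GL_\alpha$ and in particular in $\GL(\Delta)_J$, we have $T\subset G(\Delta)_J$, and $T$ can be chosen inside any maximal compact subgroup $G_J$. Third, the root system of $G_J$ agrees with that of $G(\Delta)_J$ and of $\GL(\Delta)_J$; by the very definition of $\GL(\Delta)_J$ as the subgroup generated by the $\GL_\alpha$ for $\alpha\in R(X(\Delta),J)$, the roots of $\GL(\Delta)_J$ are precisely $R(X(\Delta),J)$ (up to the Weyl action, the generating roots span the full root system in each factor). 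With these three points in hand, Corollary~\ref{coro:5-1} immediately gives the maximality of $G_J$ among connected compact Lie subgroups of $\Diff(X(\Delta),J)$ containing $T$.

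The only potentially delicate step is the verification that $\Delta(G_J)=R(X(\Delta),J)$, since one must check that passing from $\GL(\Delta)_J$ to its quotient by $\Ker\mathcal V$ and then to a maximal compact subgroup does not lose any roots; this is however routine because $\Ker\mathcal V$ is central in $\GL(\Delta)_J$ and a maximal compact subgroup of a reductive complex Lie group has the same root system. Everything else is bookkeeping using the material already developed in Section~\ref{sect:6}.
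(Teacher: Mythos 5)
Your proposal follows exactly the paper's route: the paper's "proof" is precisely the preceding paragraph establishing that $G(\Delta)_J$ acts on $X(\Delta)$ preserving $J$, contains $T$, and that $\Delta(G_J)=R(X(\Delta),J)$, followed by an appeal to Corollary~\ref{coro:5-1}. Your version just spells out these same three verifications (preservation of $J$ via $\C$-linearity on each factor, descent past $\Ker\mathcal V$ using Lemma~\ref{lemm:6-1} and the Claim, and the identification of root systems) in more detail, so it is correct and essentially identical in approach.
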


\begin{exam}
We take $X(\Delta)=\C P^1$.  In this case, $m=2$ and take $q=1$.  Then $G(\Delta)_J=(\GL(1,\C)\times \GL(1,\C))/\GL(1,\C)$ is isomorphic to $\GL(1,\C)$.  Note that our stable complex structure $J$ on $\C P^1$ is isomorphic to that in Example~\ref{exam:5-1} (1) for $S^{2}$. Therefore, $\Diff(S^2,J)$ is non-compact.  
\end{exam}

\section*{Acknowledgments}
The authors would like to thank Nigel Ray for informing them of Proposition \ref{prop:5-1} and Michael Wiemeler for useful comments.

\end{document}